 \newtheorem{theorem}{Theorem}[section]
 \newtheorem{lemma}[theorem]{Lemma}
 \newtheorem{corollary}[theorem]{Corollary}
\theoremstyle{remark}
\newtheorem{definition}[theorem]{Definition}
\DeclareMathOperator{\inc}{\mathbin{\mathrm{I}}}
\renewcommand\le{\leqslant}
\renewcommand\ge{\geqslant}
\title{Bruck nets and partial Sherk planes}
\author{John Bamberg}
\address{ %
\emph{All authors}:
Centre for the Mathematics of Symmetry and Computation,
School of Mathematics and Statistics,
The University of Western Australia,
Crawley, W.A. 6009, Australia}
\author{Joanna B. Fawcett}
\address[Fawcett]{%
\emph{Current address of J. B. Fawcett}:
Department of Pure Mathematics and Mathematical Statistics,
Centre for Mathematical Sciences,
 University of Cambridge,
 Wilberforce Road, Cambridge CB3 0WB, UK}
\author[Lansdown]{Jesse Lansdown}
\email{John.Bamberg@uwa.edu.au}
\email{j.b.fawcett@dpmms.cam.ac.uk}
\email{Jesse.Lansdown@research.uwa.edu.au}
\thanks{The first author acknowledges the support of the Australian Research Council (ARC) 
Future Fellowship FT120100036. 
The second author acknowledges the support of the ARC Discovery Grant DP130100106. 
The third author acknowledges the support of the ARC Discovery Grant DP0984540.}
\subjclass[2010]{primary 51E14; secondary 51E05, 51E15, 51F05}
\keywords{Bruck net, affine plane, finite geometry, metric plane}
\begin{document}
\maketitle

\begin{abstract}
In Bachmann's \emph{Aufbau der Geometrie aus dem Spiegelungsbegriff} (1959), it was 
shown that a finite metric plane is a Desarguesian affine plane of odd order equipped with a perpendicularity
relation on lines, and conversely. Sherk (1967) generalised this result to characterise the 
 finite affine planes of odd order by removing the `three reflections axioms' from a metric plane.
We show that one can obtain a larger class of natural finite geometries, the so-called \emph{Bruck nets} of
even degree,  by weakening Sherk's axioms to allow non-collinear points.
\end{abstract}

\section{Introduction}

Bruck nets generalise the notion of parallelism in affine planes by extending parallelism to partial linear spaces (a definition is given in Section \ref{prelim}), and they are of great interest due to their application to the study of mutually orthogonal latin squares. 
An affine plane with some parallel classes removed is an example of a Bruck net, but there exist Bruck nets that do not arise this way (see, for example, \cite{Bruen:1972aa}).
Sherk characterised affine planes of odd order as finite linear spaces with perpendicularity \cite{Sherk}, generalising the work of Bachmann who characterised finite Desarguesian affine planes of odd order as finite metric planes \cite{Bachmann}. In this paper we further generalise the work of Sherk and Bachmann by characterising Bruck nets of even degree as finite partial linear spaces with perpendicularity. Thus there is a sense in which the concepts of parallelism and perpendicularity are equivalent in finite partial linear spaces. Indeed, we prove that two lines are parallel precisely when they have a common perpendicular.

Let $\mathcal{G}$ be a geometry consisting of a set of points, a set of lines, a binary relation \emph{incidence} between points and lines, denoted by $\inc$, and a binary relation \emph{perpendicularity} between lines, denoted by $\perp$. Two lines are said to \emph{intersect} if they are each incident with a common point. We call $\mathcal{G}$ a \emph{Sherk plane} after Sherk \cite{Sherk} if it satisfies the following six axioms.

\begin{enumerate}
\item[{\bf A.}] Two distinct points are incident with a unique line.
\item[{\bf B1.}] For lines $\ell$ and $m$, if $\ell \perp m$ then $m \perp \ell$.
\item[{\bf B2.}] Perpendicular lines intersect in at least one point.
\item[{\bf B3.}] Given a point $P$ and line $\ell$, there exists at least one line $m$ such that $m \inc P$ and $m \perp \ell$.
\item[{\bf B4.}] Given a point $P$ and line $\ell$ such that $\ell \inc P$, there exists a unique line $m$ such that $m \inc P$ and $m \perp \ell$.
\item[{\bf B5.}] There exist lines $x,y$ and $z$ such that $x \perp y$, $x \not \perp z$, $y \not \perp z$ and $x, y$ and $z$ do not intersect at a common point.
\end{enumerate}

These axioms are equivalent to the incidence and perpendicularity axioms of a metric plane \cite{Bachmann}. Thus a Sherk plane generalises the notion of a metric plane by removing the powerful three reflections axioms. Any geometry (with a non-empty point set) satisfying Axiom A is a \emph{linear space}. This concept can be generalised to that of a \emph{partial linear space} by weakening Axiom A to the following.

\begin{enumerate}
\item[{\bf A*.}] Two distinct points are incident with at most one line.
\end{enumerate}

We can similarly generalise Sherk planes from a linear space equipped with perpendicularity to a partial linear space equipped with perpendicularity. We call a geometry $\mathcal{G}$ satisfying Axiom A* and Axioms B1 - B5 a \emph{partial Sherk plane}. Such a geometry is \emph{finite} if it possesses only finitely many points. 
A point of $\mathcal{G}$ is \emph{thick} if it is incident with at least three lines, and \emph{thin} otherwise.

We now state the main result of this paper.

\begin{theorem} \label{main}
The following are equivalent.
\begin{enumerate}
	\item A finite partial Sherk plane in which
	some line is incident only with thick points.
	\item A finite Bruck net of degree $r$ where $r$ is even and $r>2$.
\end{enumerate}
\end{theorem}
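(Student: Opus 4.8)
The plan is to prove the two implications separately, treating the construction of a perpendicularity on a net as the routine direction and the recovery of a net from the axioms as the substantive one. For $(2)\Rightarrow(1)$ I would start from a finite Bruck net of even degree $r>2$, with parallel classes $C_1,\dots,C_r$. Since $r$ is even I can fix a fixed-point-free involution $\sigma$ of the set of classes and declare two lines to be perpendicular exactly when their classes are interchanged by $\sigma$. Axiom A* holds because a net is a partial linear space, and symmetry (B1) is immediate from $\sigma$ being an involution. As $\sigma$ is fixed-point-free, perpendicular lines lie in distinct classes and so meet in a unique point, giving B2. For B3 and B4, the perpendiculars through a point $P$ to a line of class $C$ are exactly the lines of class $\sigma(C)$ through $P$, of which there is precisely one; this gives existence in general and uniqueness when $P$ lies on the line. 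For B5 I would use $r\ge 4$ to pick classes $C$, $\sigma(C)$ and a third class $D\notin\{C,\sigma(C)\}$ (so $\sigma(D)\notin\{C,\sigma(C)\}$ as well) and choose non-concurrent representatives, which supplies the required non-perpendicularities. Finally, since every point lies on $r\ge 4\ge 3$ lines, every point is thick, so any line is incident only with thick points.

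For $(1)\Rightarrow(2)$ I would first extract the even-degree phenomenon. No line is perpendicular to itself: if $\ell\perp\ell$, then taking a point off $\ell$ (which exists by B5) and dropping a perpendicular $n$ to $\ell$ by B3, which meets $\ell$ at a foot $R$ by B2, would make both $n$ and $\ell$ perpendicular to $\ell$ at $R$, contradicting the uniqueness in B4. Consequently, at each point $P$ the map sending a line through $P$ to its unique perpendicular through $P$ (well defined by B4, and an involution by B1) is fixed-point-free on the pencil at $P$, so every point has even degree; at a thick point the degree is thus at least $4$. A short count then shows, writing $\ell^{\perp}$ for the set of lines perpendicular to $\ell$, that $\ell^{\perp}$ consists of exactly one line through each point of $\ell$ (every perpendicular meets $\ell$ by B2, and distinct feet give distinct perpendiculars by A*), so that the number of perpendiculars to $\ell$ equals the number of points on $\ell$.

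Next I would call two lines \emph{parallel} when they admit a common perpendicular, and aim to show this is an equivalence relation whose classes, together with the incidence, form a net. The decisive step is to prove that parallel lines share \emph{all} of their perpendiculars, i.e.\ that a single common perpendicular forces $\ell^{\perp}=m^{\perp}$; this yields transitivity, shows that each $\ell^{\perp}$ is a full parallel class, and makes perpendicularity descend to a fixed-point-free involution on the set of classes. Combined with a regularity argument — that each point meets every class exactly once and that lines from different classes meet in a unique point — this identifies the geometry as a Bruck net whose degree equals the common even point-degree $r\ge 4>2$, with the induced class involution playing the role of the $\sigma$ of the converse direction.

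I expect this decisive step to be the main obstacle. The three-reflection axioms of a metric plane, which Sherk removed, are precisely what would normally force perpendicularity to descend to a fixed-point-free involution on directions; lacking them, I would replace that input with a finite double-counting argument built on the identity relating $|\ell^{\perp}|$ and $|\ell|$ together with the even-degree fact. Here the hypothesised line of thick points is essential: I would use it to propagate thickness and regularity across the whole geometry and to rule out the degenerate configurations — thin pencils and near-collapses of the set of directions — that Axioms A* and B1–B5 alone still permit.
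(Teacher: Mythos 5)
Your direction $(2)\Rightarrow(1)$ is sound and is essentially the paper's Lemma~\ref{NisPSP} (same fixed-point-free involution on parallel classes, same verification of B1--B5), and your opening moves for $(1)\Rightarrow(2)$ --- no line is self-perpendicular, every pencil has even size, and $|\ell^\perp|=n_\ell$ --- coincide with Lemmas~\ref{NoSelfPerp}, \ref{rEven} and~\ref{NoPerps2}. (A minor slip: a point off an \emph{arbitrary} line is not immediate from B5; it requires the triangle construction of Lemma~\ref{NonCollinear}.) However, there is a genuine gap exactly where you place your ``decisive step.'' You never establish that a single common perpendicular forces $\ell^\perp=m^\perp$, nor the companion fact your regularity argument also needs, namely that two lines with a common perpendicular cannot intersect. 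Both statements are equivalent to the paper's Theorem~\ref{NoPoles}: no point admits two distinct perpendiculars to a line not through it. Your stated plan for this step --- ``a finite double-counting argument built on the identity relating $|\ell^\perp|$ and $|\ell|$ together with the even-degree fact'' --- is not adequate, and this is precisely where the real content of the theorem lies.

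The reason pure double counting cannot close this step is visible in the paper's own proof. The counting apparatus (poles and polars, Lemmas~\ref{PoleAllOrOne}--\ref{constant_on_line_perp} and \ref{Npoles}--\ref{lowerbound}, plus the count inside Theorem~\ref{NoPoles}) produces a contradiction from a putative pole only under the hypothesis $r<n$. To reach that hypothesis, Lemma~\ref{rn} must eliminate the case $r\ge n$, and there the geometry is forced to be a finite projective plane carrying a polarity with \emph{no} absolute points. That configuration is consistent with every counting identity you propose to use; it is excluded only by Baer's theorem on polarities of finite projective planes, an external and nontrivial input whose own proof rests on incidence-matrix/eigenvalue methods rather than elementary counting. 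So your outline is missing the central ideas of the hard direction: the pole/polar dichotomy of Lemma~\ref{PoleAllOrOne} and the appeal to Baer's theorem. Once Theorem~\ref{NoPoles} is in hand, the remainder of your sketch (parallelism as possession of a common perpendicular, the induced fixed-point-free involution on classes, Axioms N1--N3, and using the thick line to propagate regularity) does go through, along the same lines as the paper's Lemmas~\ref{thickpoints}--\ref{Tau}.
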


In fact, we prove in Lemmas \ref{NisPSP} and \ref{Tau} that perpendicularity can only have the form of Definition~\ref{PerpInNet}. Further, the thickness condition cannot be removed from Theorem \ref{main}(1), as there are infinitely many examples of finite partial Sherk planes in which every line is incident with a thin point.

The format of this paper is as follows. In Section \ref{prelim}, we define Bruck nets and give some of their fundamental properties. In Section \ref{ProofSection}, we prove Theorem \ref{main}, and in Section \ref{further}, we give a method for constructing partial Sherk planes that do not satisfy the thickness condition of Theorem \ref{main}(1).

\section{Bruck nets} \label{prelim}

A \emph{(Bruck) net} is a partial linear space satisfying the following three axioms.
\begin{enumerate}
\item[{\bf N1:}] For a line $\ell$ and point $P$ not incident with $\ell$, there exists a unique line $m$ such that $m \inc P$ and $m$ and $\ell$ do not intersect.
\item[{\bf N2:}] For every line, there exist two points not incident with it.
\item[{\bf N3:}] For every point, there exist two lines not incident with it.
\end{enumerate}
The line $m$ in Axiom N1 is  the \emph{parallel to $\ell$ at the point $P$}. Two lines are  \emph{parallel} to each other if they do not intersect. Parallelism is an equivalence relation, and the equivalence classes of parallel lines are called \emph{parallel classes}  \cite[p.141]{Dembowski}. We denote the parallel class of $\ell$ by $[ \ell]$. A finite net has the following properties  \cite{BruckII}, \cite[p.141]{Dembowski}.

\begin{lemma} \label{NetProperties}
In a finite Bruck net, there exist integers $r$ and $n$ such that the following are true.
\begin{enumerate}
\item There are $r$ lines incident with every point. \label{rLines}
\item There are $r$ parallel classes. \label{pClasses}
\item There are $n$ points on every line.
\item There are $n$ lines in each parallel class. \label{nLinesParallelClass}
\item There are $n^2$ points in total. \label{netPoints}
\item There are $rn$ lines in total.
\item Two lines from distinct parallel classes intersect in a unique point. \label{intersect}
\item Every point is incident with exactly one line of each parallel class. \label{IncOnePClass}
\end{enumerate}
\end{lemma}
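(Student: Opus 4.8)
The plan is to derive every item from Axiom~A*, the uniqueness in Axiom~N1, and the quoted fact that parallelism is an equivalence relation; the whole argument is driven by two bijections, and almost nothing else is needed. I would begin with items~(\ref{intersect}) and~(\ref{IncOnePClass}), since the counting rests entirely on them. For~(\ref{intersect}), two lines from distinct parallel classes are by definition not parallel, hence meet in at least one point, while Axiom~A* forbids two distinct lines from sharing two points, so the intersection is a single point. For~(\ref{IncOnePClass}), a point $P$ lies on \emph{at most} one line of a class $[\ell]$, as two distinct lines of $[\ell]$ through $P$ would be parallel yet concurrent; and $P$ lies on \emph{at least} one, because if $P\inc\ell$ we are done, and otherwise Axiom~N1 furnishes a line through $P$ disjoint from $\ell$, which therefore lies in $[\ell]$.

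Items~(\ref{rLines}) and~(\ref{pClasses}) then come for free and simultaneously: by~(\ref{IncOnePClass}) the assignment of each line through $P$ to its parallel class is a bijection from the pencil of lines at $P$ onto the set of parallel classes (injective since two concurrent lines cannot be parallel, surjective by~(\ref{IncOnePClass}) itself). Hence the number of lines through $P$ equals the number of classes, a value that does not depend on $P$; this common value is the degree $r$, finite because the net is.

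The real obstacle is item~(3), the constancy of the line size, and here I would use a third parallel class. For a line $\ell$ and any class $C\ne[\ell]$, sending a point of $\ell$ to the unique line of $C$ through it (well defined by~(\ref{IncOnePClass})) is a bijection onto $C$, with inverse ``intersect with $\ell$'' supplied by~(\ref{intersect}); thus $\ell$ carries exactly $|C|$ points for \emph{every} class $C\ne[\ell]$. To convert this into a single constant I must compare two arbitrary lines $\ell,\ell'$ through a class distinct from both $[\ell]$ and $[\ell']$: when their classes differ this needs a \emph{third} class, which is exactly the point at which $r>2$ is indispensable (in degree two the two line sizes can genuinely differ). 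Granting this, all lines share a common size $n$, and since $|C|=n$ for any $C$ not containing a chosen external line, item~(\ref{nLinesParallelClass}) follows as well.

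The remaining items are pure double counting. Fixing two classes $C_1,C_2$, item~(\ref{netPoints}) follows because the map sending a point to the pair consisting of its line of $C_1$ and its line of $C_2$ is a bijection onto $C_1\times C_2$ by~(\ref{intersect}) and~(\ref{IncOnePClass}), giving $n^2$ points; and item~(6) is immediate, the $r$ classes of $n$ lines each accounting for $rn$ lines in all. The only subtle dependence in the whole proof is the appeal to a third parallel class in item~(3), which is why the regularity is tied to $r>2$.
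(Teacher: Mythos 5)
The paper contains no internal proof of this lemma to compare against: it is quoted as a package of known facts, with citations to Bruck \cite{BruckII} and Dembowski \cite[p.~141]{Dembowski}. Your proposal therefore does something the paper does not, namely give a self-contained derivation from Axiom A* and Axioms N1--N3, and it is correct in the only regime in which the paper ever uses the lemma, namely degree $r>2$. Your structure is the standard counting argument of the cited sources: items (\ref{intersect}) and (\ref{IncOnePClass}) from A* and N1, then the pencil-to-classes bijection for items (\ref{rLines}) and (\ref{pClasses}), the points-of-$\ell$-to-lines-of-$C$ bijection for items (3) and (\ref{nLinesParallelClass}), and the point-to-pair-of-lines bijection for item (\ref{netPoints}). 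Your caveat on item (3) is a genuine and worthwhile observation, not a hedge: under the paper's bare definition of a net, the lemma as literally stated is false in degree $2$ --- the $2\times 3$ grid (two rows of three points, three columns of two points) satisfies A*, N1, N2 and N3, yet has lines of sizes $2$ and $3$, parallel classes of sizes $3$ and $2$, and $6$ points on $5$ lines, violating items (3)--(6). The paper is unaffected, since Theorem \ref{main} only invokes the lemma for even $r>2$, but your proof has the merit of isolating exactly where a third parallel class is needed. The one thing you gloss over is the claim that $r$ is ``finite because the net is'': finitely many points does not by itself bound the number of lines. One should observe that once there are at least three classes, no line can have fewer than two points (an empty or one-point line forces $r\le 2$ via your own pencil bijection), whence distinct lines are determined by pairs of points and $r$ is finite --- a half-line repair rather than a structural gap.
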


We call a finite Bruck net with $r$ lines through any point and $n$ points on any line an \emph{$(n, r)$-net} and say it has \emph{order $n$} and \emph{degree $r$}. We extend Sherk's definition \cite{Sherk} of perpendicularity for finite affine planes of odd order  as follows.

\begin{definition} \label{PerpInNet}
Let $\mathcal{N}$ be an $(n,r)$-net where $r$ is even and $r >2$. Since $\mathcal{N}$ has an even number of parallel classes by Lemma \ref{NetProperties}(\ref{pClasses}), we may choose an involution $\tau$ that acts fixed-point-freely on the set of parallel classes. For lines $\ell$ and $m$, define $\ell \perp m$ if and only if $[\ell]^\tau = [m]$.
\end{definition}

\begin{lemma} \label{NisPSP}
Let $\mathcal{N}$ be an $(n,r)$-net with $r$ even,    $r>2$ and perpendicularity  as in  Definition $\ref{PerpInNet}$. Then $\mathcal{N}$ is a partial Sherk plane in which every point is incident with $r$ lines.
\end{lemma}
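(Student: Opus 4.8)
The plan is to verify Axiom A* and Axioms B1--B5 against Definition~\ref{PerpInNet}, observing at the outset that two ingredients are free: a net is by definition a partial linear space, so Axiom A* holds, and Lemma~\ref{NetProperties}(\ref{rLines}) already supplies the $r$ lines through each point. So the real work is to check that perpendicularity, as defined by the fixed-point-free involution $\tau$, satisfies B1--B5.

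Axioms B1--B4 should all fall out quickly from the elementary properties of $\tau$ and the net. For B1, since $\tau$ is an involution, $[\ell]^\tau = [m]$ forces $[m]^\tau = [\ell]$, giving symmetry. For B2, if $\ell \perp m$ then $[m] = [\ell]^\tau \neq [\ell]$ because $\tau$ is fixed-point-free, so $\ell$ and $m$ lie in distinct parallel classes and hence meet in a unique point by Lemma~\ref{NetProperties}(\ref{intersect}). For B3 and B4, given a point $P$ and a line $\ell$, the lines perpendicular to $\ell$ are exactly those in the class $[\ell]^\tau$; by Lemma~\ref{NetProperties}(\ref{IncOnePClass}) there is exactly one line $m$ through $P$ in that class, which simultaneously yields the existence demanded by B3 and the uniqueness demanded by B4. (In fact the perpendicular through a point is always unique here, so the hypothesis $\ell \inc P$ of B4 is not even used.)

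The only axiom requiring a genuine construction, and hence the crux of the proof, is B5. The idea is to use $r \geq 4$ to produce three suitable parallel classes and $n \geq 2$ to defeat concurrency. Since $\tau$ is fixed-point-free, choosing any class $A$ and putting $B = A^\tau$ gives $A \neq B$, and since $r \geq 4$ there is a further class $C \notin \{A,B\}$. I would then fix a point $Q$, let $x$ and $y$ be the lines through $Q$ in classes $A$ and $B$ respectively (so $x \perp y$, with $Q$ their unique common point), and let $z$ be a line of class $C$ not incident with $Q$; such a line exists because class $C$ contains $n \geq 2$ lines by Lemma~\ref{NetProperties}(\ref{nLinesParallelClass}) while only one of them passes through $Q$. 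Then $x \not\perp z$ and $y \not\perp z$, precisely because $[x]^\tau = B$ and $[y]^\tau = A$ are both different from $C = [z]$; and no point is incident with all three lines, since the only common point of $x$ and $y$ is $Q$, which does not lie on $z$. The subsidiary facts to pin down are that $n \geq 2$ (which follows from Axiom N2 together with Lemma~\ref{NetProperties}(\ref{netPoints})) and the short class computation showing $z$ is perpendicular to neither $x$ nor $y$; beyond this, B5 is the main obstacle only in the sense that it is the single axiom not given directly by the standard net properties, and everything else is bookkeeping.
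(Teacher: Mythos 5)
Your proposal is correct and follows essentially the same route as the paper's own proof: Axioms A*, B1--B4 via the involution $\tau$ and Lemma~\ref{NetProperties}(\ref{intersect}),(\ref{IncOnePClass}), and B5 by taking a perpendicular pair through a point together with a line from a third parallel class (guaranteed by $r\ge 4$ and $n\ge 2$) avoiding their intersection point. The only cosmetic difference is that you make explicit the use of fixed-point-freeness in B2 (perpendicular lines lie in distinct classes), which the paper leaves implicit.
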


\begin{proof}
Axiom A* holds by definition. Axiom B1 holds since $\tau$ is an involution. By Lemma \ref{NetProperties}(\ref{intersect}), any two lines in different parallel classes meet in a point, so Axiom B2 holds. Let $\ell$ be a line and $P$ a point. By Lemma \ref{NetProperties}(\ref{IncOnePClass}), there exists $m$ in $[\ell]^\tau$ such that $m \inc P$. Now $[\ell]^\tau = [m]$, so $\ell \perp m$ and Axiom B3 holds. Moreover, any such $m$ is unique by Lemma \ref{NetProperties}(\ref{IncOnePClass}), so Axiom B4 holds. There are at least four parallel classes by Lemma $\ref{NetProperties}(\ref{pClasses})$. Let $\ell$ be a line and choose $m$ in $[\ell]^\tau$. Now $\ell$ and $m$ are perpendicular by definition, and they intersect in a unique point $P$ by Lemma $\ref{NetProperties}(\ref{intersect})$. Furthermore, $n >1$ by Lemma $\ref{NetProperties}(\ref{netPoints})$ and Axiom N2, so by Lemma $\ref{NetProperties}(\ref{nLinesParallelClass})$ and $(\ref{IncOnePClass})$, there exists a line $g$ not in $[\ell]\cup[m]$  such that $g$ is not incident with $P$. Thus B5 holds. By Lemma \ref{NetProperties}(\ref{rLines}), there are exactly $r$ lines incident with any point.
\end{proof}

\section{Proof of Theorem 1.1} \label{ProofSection}

If Theorem \ref{main}(2) holds, then Theorem \ref{main}(1) holds by Lemma \ref{NisPSP}.
We assume for the remainder of the section that $\mathcal{G}$ is a partial Sherk plane.
Let $v$ be the total number of points, and $b$ the total number of lines. Since $v>0$ and $b>0$ by Axioms B5 and B2, 
it follows from Axiom B3 that there is at least one line on every point, and from Axioms B3 and B2 that every line is incident with some point. We denote the number of lines through a point $P$ by $r_P$, and the number of points on a line $\ell$ by $n_\ell$. Note that Lemmas \ref{NoSelfPerp}, \ref{rEven},   \ref{Sherk6} and \ref{nPointsPerLine}   are similar to results in \cite{Sherk} but their proofs are included for completeness; in addition, the proofs of Lemmas \ref{nPointsPerLineNotPerp} and \ref{rn} contain arguments from  \cite{Sherk}. In Section \ref{s:basics}, we prove some basic results that do not require the assumption that some line is incident only with thick points,  and in Section \ref{s:}, we assume this thickness condition and prove that Theorem \ref{main}(2) holds.

\subsection{The basics.}
\label{s:basics}

\begin{lemma} \label{NonCollinear}
There exist three non-collinear points, and some line has at least three points.
\end{lemma}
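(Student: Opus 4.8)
The plan is to extract a non-degenerate configuration from Axiom B5 and then manufacture all the required points as feet of perpendiculars, invoking Axiom B4 at each stage to rule out the degenerate coincidences that Axiom A* alone cannot exclude.

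First I would fix lines $x,y,z$ as in Axiom B5 and check they are pairwise distinct. Since $x\perp y$ but $z\not\perp y$ (Axiom B1), we cannot have $x=z$, and symmetrically $y\ne z$. The delicate case is $x=y$: then $x\perp x$, and as $x,y,z$ are not concurrent, $x$ and $z$ share no point, so any point $Q$ on $z$ (every line carries a point by Axioms B3 and B2) lies off $x$; dropping a perpendicular from $Q$ to $x$ (Axiom B3) and taking its foot $F$ on $x$ (Axiom B2) yields a perpendicular to $x$ at $F$ different from the self-perpendicular line $x$, contradicting Axiom B4. Hence $x\ne y$, so $x$ and $y$ meet in a unique point $P$ (Axioms B2 and A*), and since the three lines are not concurrent, $z$ is not incident with $P$.

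Next I would build a triangle. Dropping a perpendicular $m$ from $P$ to $z$ (Axiom B3) and taking its foot $R$, the common point of $m$ and $z$ (Axiom B2), gives $R\ne P$ because $R$ lies on $z$ while $P$ does not; here $m$ is the line on $P$ and $R$, and $m\notin\{x,y,z\}$ since $m\perp z$ whereas $x,y\not\perp z$ and $P$ is off $z$. Dropping a perpendicular $n$ from $R$ to $x$ with foot $F$ on $x$, I would show $F\ne P$: otherwise $n$ would be the line on $P$ and $R$, namely $m$, giving a perpendicular to $x$ at $P$, which Axiom B4 forces to equal $y$ (the unique perpendicular to $x$ at $P$), contradicting $m\ne y$. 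A one-line use of Axiom A* gives $R\notin x$, for $R$ on $x$ would make the line on $P$ and $R$ equal to both $m$ and $x$, whence $x\perp z$. Thus $P$ and $F$ lie on $x$ while $R$ does not, so $P,F,R$ are non-collinear, establishing the first assertion.

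Finally, for a line with three points I would work on $m$, which already carries $P$ and $R$ and, by non-collinearity, misses $F$. Dropping a perpendicular $g$ from $F$ to $m$ with foot $H$ on $m$, it remains to check $H\ne P$ and $H\ne R$. If $H=P$ then $g$ is the line on $F$ and $P$, namely $x$, so $x\perp m$; as $m$ passes through $P$, Axiom B4 forces $m=y$, a contradiction. If $H=R$ then $g$ is the line on $F$ and $R$, namely $n$, so $n\perp m$; but $m\perp z$ makes $z$ also perpendicular to $m$ at $R$, and $n\ne z$ (else $z\perp x$), so two distinct perpendiculars to $m$ at $R$ again violate Axiom B4. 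Hence $P,R,H$ are three distinct points on $m$. The main obstacle throughout is that Axiom B3 supplies only the \emph{existence} of a perpendicular and Axiom A* only \emph{at most one} line on two points, so every newly constructed point must be proved distinct from its predecessors; the uniform device is Axiom B4, since a forbidden coincidence of feet always collapses two of the constructed lines into a single line through a common point, which B4 pins down as a unique perpendicular and thereby forces a line equality contradicting one of the non-perpendicularities of Axiom B5. The subtlest point is the very first one, ruling out $x=y$ and with it self-perpendicular lines, because the notion of the foot of a perpendicular presupposes a point off the line, which is essentially what the lemma is asserting.
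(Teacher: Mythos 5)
Your proof is correct and follows essentially the same construction as the paper's: intersect $x$ and $y$ at $P$, drop a perpendicular to $z$ to obtain a second vertex, drop a perpendicular from that vertex back to $x$ (the paper uses $y$; the choice is symmetric) to complete a non-collinear triple, and finally drop a perpendicular from the third vertex onto the first perpendicular line to produce a third point on it, with Axioms A* and B4 ruling out coincidences exactly as in the paper. Your preliminary step ruling out $x=y$ (and the pairwise distinctness of $x,y,z$) is a careful addition that the paper omits, but it does not alter the approach.
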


\begin{proof}
Let $x$, $y$ and $z$ be lines as in Axiom B5. Then $x$ and $y$ intersect at a point $P$ by Axiom B2. By Axiom B3, there exists a  perpendicular\footnote{For a point $P$ and line $\ell$, a  \emph{perpendicular from $P$ to $\ell$} is a line incident with  $P$ and perpendicular to  $\ell$.} $g$ from $P$ to $z$. Clearly $g$ is not $x$ or $y$ as $z$ is not perpendicular to either of these lines. Let $Q$ be the point of intersection of $z$ and $g$. Now $Q \neq P$ since $z$ is not incident with $P$ by Axiom B5. Let $h$ be a perpendicular from $Q$ to $y$. Note that $h \neq z$ as $z$ is not perpendicular to $y$, nor is $h$ equal to $x$ as this would imply that $Q \inc x$ and therefore that $g = x$ by Axiom A*, a contradiction. Furthermore $h \neq g$ or else $P \inc h$, in which case $h=x$ by Axiom B4. Denote the intersection of $h$ and $y$ by $R$. Now $R$ and $P$ are distinct since $g$ and $h$ are distinct, and $R$ and $Q$ are distinct since $g$ and $y$ are distinct. Moreover, $P$, $Q$ and $R$ are non-collinear by Axiom A* since $g\neq y$. Thus there exist three non-collinear points. Let $k$ be a  perpendicular from $R$ to $g$, and let $S$ be the intersection of $k$ and $g$. Now $S \neq P$ or else $k=y$ by Axiom A*, which would imply that $g = x$ by Axiom B4, a contradiction. Furthermore, $S \neq Q$ or else $k=z$ since $g$ is perpendicular to $z$ at $Q$ and perpendicular to $k$ at $S$, but then $R$ and $Q$ are on $z$, so $z= h$, a contradiction. Thus $P$, $Q$ and $S$ are pairwise distinct points on the line $g$.
\end{proof}

\begin{lemma} \label{NoSelfPerp}
No line is perpendicular to itself.
\end{lemma}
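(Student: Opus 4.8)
The plan is to argue by contradiction: suppose some line $\ell$ satisfies $\ell \perp \ell$, and derive a violation of the uniqueness guaranteed by Axiom B4. The key observation is that if $\ell \perp \ell$, then $\ell$ is itself a perpendicular to $\ell$ at each of its own points, so exhibiting any \emph{second} perpendicular to $\ell$ through such a point immediately contradicts B4.

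First I would produce a point off $\ell$. Since Lemma \ref{NonCollinear} guarantees three non-collinear points, these cannot all be incident with $\ell$, so there is a point $Q$ that is not incident with $\ell$. Next, applying Axiom B3 to $Q$ and $\ell$ yields a line $g$ with $g \inc Q$ and $g \perp \ell$, and Axiom B2 then provides a point $R$ incident with both $g$ and $\ell$.

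Now I would apply Axiom B4 at the point $R$, which is incident with $\ell$. On one hand, $g \perp \ell$ with $g \inc R$; on the other hand, the assumption $\ell \perp \ell$ together with $R \inc \ell$ exhibits $\ell$ as a perpendicular to $\ell$ through $R$. The uniqueness clause of B4 forces $g = \ell$, whence $Q \inc \ell$, contradicting the choice of $Q$. This completes the argument.

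The only non-routine ingredient is the appeal to Lemma \ref{NonCollinear} to secure a point off $\ell$; everything else is a direct application of B2, B3 and the uniqueness in B4. I expect no serious obstacle, since the self-perpendicularity hypothesis simply collapses the two a priori distinct perpendiculars at $R$ into the single line that B4 forbids.
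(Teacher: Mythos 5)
Your proof is correct and follows essentially the same route as the paper: a point off $\ell$ obtained from Lemma \ref{NonCollinear}, a perpendicular $g$ via Axiom B3, its foot via Axiom B2, and the uniqueness in Axiom B4 forcing $g=\ell$, which is impossible since $g$ passes through a point not on $\ell$. The paper phrases it directly rather than as an explicit contradiction, but the argument is identical.
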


\begin{proof}
Let $\ell$ be a line. By Lemma \ref{NonCollinear}, there exists a point $P$ not on $\ell$. By Axiom B3, there exists a perpendicular $g$ from $P$ to  $\ell$. By Axiom B2, there exists a point $Q$ at the intersection of $\ell$ and $g$. Now $g$ is the unique line incident with $Q$ and perpendicular to $\ell$ by Axiom B4. Clearly $g \neq \ell$, as $g$ is incident with $P$ and $\ell$ is not. Thus $\ell$ cannot be perpendicular to $\ell$.
\end{proof}

\begin{lemma} \label{rEven}
The number of lines on any point is even.
\end{lemma}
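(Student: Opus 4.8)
The plan is to prove that the number of lines through any point $P$ is even by pairing up the lines through $P$ using perpendicularity. The key fact I would exploit is Axiom B4: for each line $\ell$ incident with $P$, there is a \emph{unique} line $m$ incident with $P$ and perpendicular to $\ell$. Since perpendicularity is symmetric by Axiom B1, this assignment $\ell \mapsto m$ is an involution on the set of lines through $P$. By Lemma~\ref{NoSelfPerp}, no line is perpendicular to itself, so this involution has no fixed points. A fixed-point-free involution on a finite set forces the set to have even cardinality, giving $r_P$ even.

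To carry this out, first I would fix a point $P$ and let $\mathcal{L}_P$ denote the set of lines incident with $P$. For $\ell \in \mathcal{L}_P$, Axiom B3 (or directly B4 applied with the point on the line) guarantees a perpendicular to $\ell$ through $P$, and Axiom B4 guarantees it is unique; call it $\ell^\sigma$. The crucial checks are that $\sigma$ is well-defined as a map $\mathcal{L}_P \to \mathcal{L}_P$, that it is an involution, and that it is fixed-point-free. Well-definedness and membership in $\mathcal{L}_P$ are immediate from B4, since $\ell^\sigma \inc P$ by construction. That $\sigma$ is an involution follows from B1: if $m = \ell^\sigma$ is the unique perpendicular to $\ell$ through $P$, then $\ell \perp m$ gives $m \perp \ell$ by B1, and $\ell$ is incident with $P$, so $\ell$ is \emph{a} perpendicular to $m$ through $P$; by uniqueness (B4 applied to $m$), $\ell = m^\sigma = (\ell^\sigma)^\sigma$. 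Fixed-point-freeness is exactly Lemma~\ref{NoSelfPerp}: $\ell^\sigma = \ell$ would mean $\ell \perp \ell$, which is impossible.

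Once $\sigma$ is established as a fixed-point-free involution on the finite set $\mathcal{L}_P$, the conclusion is standard: $\mathcal{L}_P$ partitions into two-element orbits $\{\ell, \ell^\sigma\}$, so $r_P = |\mathcal{L}_P|$ is even. I would note that $\mathcal{L}_P$ is finite; this follows because the geometry has finitely many points (it is a finite partial Sherk plane) and each line through $P$ meets a second point, with distinct lines meeting distinct second points by Axiom A*, so $r_P$ is bounded by the number of points.

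I do not anticipate a serious obstacle here; the argument is short once the involution is set up correctly. The one point requiring care is the verification that $\sigma$ is genuinely an involution rather than merely a map of order dividing two in some looser sense — that is, that the unique perpendicular is reciprocal — and this is precisely where the symmetry axiom B1 is needed in tandem with the uniqueness in B4. Getting the logical order right (symmetry to produce a candidate, then uniqueness to pin it down) is the only place where a careless argument could go wrong.
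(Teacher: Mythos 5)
Your proof is correct and takes essentially the same approach as the paper: the paper likewise pairs each line through $P$ with its unique perpendicular at $P$ (Axiom B4) and invokes Lemma~\ref{NoSelfPerp} to see the pairing has no fixed points, so $r_P$ is even; you have merely spelled out the involution $\sigma$ and the role of Axiom B1 in making the pairing reciprocal. The only caveat is your aside on finiteness: the sub-claim that every line through $P$ contains a second point is asserted without justification (it is true, but needs an argument, e.g.\ via Lemma~\ref{NonCollinear}), whereas the paper simply treats $r_P$ as a well-defined finite number, so this does not affect the comparison.
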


\begin{proof}
Let $P$ be a point. By Axiom B4, every line incident with $P$ has a unique perpendicular at $P$. It then follows from Lemma \ref{NoSelfPerp} that $r_P$ is even.
\end{proof}

\begin{lemma} \label{NoPerps}
Let $\ell$ and $m$ be lines that are not perpendicular. Let $P$ and $Q$ be distinct points that are incident with $\ell$, and let $g$ and $h$ be perpendiculars from $P$ and $Q$ respectively to $m$. Then $g\neq h$, and $g$ and $h$ intersect $m$ in distinct points. 
\end{lemma}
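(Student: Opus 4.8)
The plan is to prove the two assertions by contradiction, the first via Axiom A* and the second by reducing it to the first via Axiom B4. Before starting, I would pin down the intersection points. Since $g \perp m$ and $h \perp m$, Axiom B2 guarantees that $g$ meets $m$ and that $h$ meets $m$. Moreover, Lemma \ref{NoSelfPerp} rules out $m \perp m$, so from $g \perp m$ and $h \perp m$ we get $g \neq m$ and $h \neq m$; hence by Axiom A* each of these meetings occurs in a single point. Let $G$ denote the point in which $g$ meets $m$, and $H$ the point in which $h$ meets $m$, so that $G$ and $H$ are well defined and both lie on $m$.

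For the first claim, I would suppose toward a contradiction that $g = h$. Then the line $g$ is incident with both $P$ and $Q$. But $\ell$ is also incident with both $P$ and $Q$, and $P \neq Q$, so Axiom A* forces $g = \ell$. Since $g \perp m$, this yields $\ell \perp m$, contradicting the hypothesis that $\ell$ and $m$ are not perpendicular. Hence $g \neq h$.

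For the second claim, I would suppose toward a contradiction that $G = H$. This common point is incident with $m$, and both $g$ and $h$ are incident with it and perpendicular to $m$. Applying Axiom B4 to the point $G$ and the line $m$ (noting the hypothesis of B4 is met because $G \inc m$), the perpendicular to $m$ through $G$ is unique, so $g = h$. This contradicts the first claim, and therefore $G \neq H$; that is, $g$ and $h$ meet $m$ in distinct points.

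I do not anticipate any genuine obstacle here. The only points requiring care are purely preliminary: establishing that the feet $G$ and $H$ are honest single points (which needs Lemma \ref{NoSelfPerp} together with Axiom A*) and observing that the second part must be chained to the first, since it is precisely the conclusion $g \neq h$ that the uniqueness in Axiom B4 contradicts.
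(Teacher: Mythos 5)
Your proposal is correct and follows essentially the same route as the paper's proof: Axiom A* forces $g=\ell$ (hence $\ell\perp m$) if $g=h$, and Axioms B2 and B4 then force the feet on $m$ to be distinct. The extra preliminary step verifying that the feet are well-defined single points (via Lemma \ref{NoSelfPerp} and Axiom A*) is a harmless elaboration of what the paper leaves implicit.
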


\begin{proof}
If $g = h$, then both $P$ and $Q$ are incident with $g$. Thus $g = \ell$ by Axiom A*, so $\ell \perp m$, a contradiction. Hence $g$ and $h$ are distinct. It then follows from Axioms B2 and  B4 that $g$ and $h$ meet $m$ in distinct points.
\end{proof}

\begin{lemma} \label{NoPerps2}
Given any line $\ell$, there are exactly $n_\ell$ lines that are perpendicular to $\ell$.
\end{lemma}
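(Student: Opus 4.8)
The plan is to set up a bijection between the points incident with $\ell$ and the lines perpendicular to $\ell$, and then count. For each point $P$ incident with $\ell$, Axiom B4 provides a unique line through $P$ that is perpendicular to $\ell$; this assignment defines a map $\Phi$ from the set of $n_\ell$ points on $\ell$ to the set of lines perpendicular to $\ell$. I would verify at the outset that $\Phi$ is well-defined: Axiom B4 applies precisely because each such $P$ is incident with $\ell$, guaranteeing both existence and uniqueness of the perpendicular at $P$.

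Next I would show $\Phi$ is injective. Suppose two distinct points $P$ and $Q$ of $\ell$ were mapped to the same perpendicular $g$. Then $g$ is incident with both $P$ and $Q$, so $g = \ell$ by Axiom A*, since $\ell$ is already a line through both points. But this forces $\ell \perp \ell$, contradicting Lemma \ref{NoSelfPerp}. Hence distinct points of $\ell$ yield distinct perpendiculars.

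Finally I would establish surjectivity. Given any line $m$ with $m \perp \ell$, Axiom B2 guarantees that $m$ and $\ell$ intersect in at least one point $P$, which is then incident with $\ell$. By Axiom B4, $m$ is the unique line through $P$ perpendicular to $\ell$, so $m = \Phi(P)$ lies in the image. Combining the three steps, $\Phi$ is a bijection, and so the number of lines perpendicular to $\ell$ equals the number of points on $\ell$, namely $n_\ell$.

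The argument is entirely elementary and I do not anticipate a genuine obstacle. The only two points requiring care are that the self-perpendicular case is excluded (supplied by Lemma \ref{NoSelfPerp}, which drives injectivity) and that every perpendicular actually meets $\ell$ so as to be captured by some point of $\ell$ (supplied by Axiom B2, which drives surjectivity); both are already available from the earlier results.
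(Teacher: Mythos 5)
Your proof is correct and follows essentially the same route as the paper: the paper likewise uses Axiom B4 to assign a unique perpendicular to each point of $\ell$, Axiom A* together with Lemma \ref{NoSelfPerp} for distinctness, and Axiom B2 to show no other perpendiculars exist. You have merely packaged the same three steps explicitly as a bijection, which is a presentational difference only.
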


\begin{proof}
 By Axiom B4, each of the $n_\ell$ points of $\ell$ has a unique perpendicular to $\ell$, and these perpendiculars are pairwise distinct by Axiom A* and Lemma \ref{NoSelfPerp}. By Axiom B2, no other lines are perpendicular to $\ell$. Thus there are exactly $n_\ell$ lines that are perpendicular to $\ell$.
\end{proof}

\begin{lemma} \label{nPointsPerLineNotPerp}
Let $\ell$ and $m$ be lines that are not perpendicular. Then $n_\ell=n_m$.
\end{lemma}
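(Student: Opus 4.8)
The plan is to build an injection from the points of $\ell$ to the points of $m$, and then to invoke the symmetry of perpendicularity to obtain an injection in the other direction. For each point $P$ incident with $\ell$, Axiom B3 supplies a perpendicular $g_P$ from $P$ to $m$, and Axiom B2 guarantees that $g_P$ meets $m$. First I would check that this meeting point is unique: since $g_P \perp m$ and no line is perpendicular to itself by Lemma~\ref{NoSelfPerp}, we have $g_P \neq m$, so by Axiom A* the lines $g_P$ and $m$ share at most one point. Hence $g_P \cap m$ is a single well-defined point, which I denote $f(P)$, giving a map $f$ from the points of $\ell$ to the points of $m$.

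Next I would show that $f$ is injective, which is essentially immediate from Lemma~\ref{NoPerps}. If $P$ and $Q$ are distinct points of $\ell$, then, using that $\ell$ and $m$ are not perpendicular, that lemma tells us the perpendiculars $g_P$ and $g_Q$ meet $m$ in distinct points, i.e.\ $f(P) \neq f(Q)$. Therefore $f$ is an injection and $n_\ell \le n_m$.

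Finally, since perpendicularity is symmetric by Axiom B1, the hypothesis that $\ell$ and $m$ are not perpendicular equivalently asserts that $m$ and $\ell$ are not perpendicular. Running the identical argument with the roles of $\ell$ and $m$ interchanged yields an injection from the points of $m$ to the points of $\ell$, so $n_m \le n_\ell$. Combining the two inequalities gives $n_\ell = n_m$, as required.

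I do not expect a serious obstacle, since Lemma~\ref{NoPerps} already packages the injectivity of $f$; the bulk of the work was done there. The only point genuinely requiring care is the well-definedness of $f$ — that each chosen perpendicular meets $m$ in exactly one point — which, as above, reduces to ruling out $g_P = m$ via Lemma~\ref{NoSelfPerp}. It is also worth noting that the argument makes no use of the thickness condition, so it belongs correctly to the ``basics'' subsection.
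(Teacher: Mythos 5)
Your proof is correct and follows essentially the same route as the paper: both rest on Axiom B3 together with Lemma~\ref{NoPerps} to produce, from the points of $\ell$, distinct objects associated with $m$, giving $n_\ell \le n_m$ and then equality. The only cosmetic differences are that you inject into the points of $m$ (the feet of the perpendiculars) where the paper counts the perpendicular lines themselves against Lemma~\ref{NoPerps2}, and you invoke the symmetry of $\perp$ explicitly where the paper phrases it as a without-loss-of-generality assumption.
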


\begin{proof}
Assume without loss of generality that $n_\ell\ge n_m$. There are at least $n_\ell$ lines intersecting $\ell$ that are perpendicular to $m$ by Axiom B3 and Lemma \ref{NoPerps}, and there are exactly $n_m$ lines that are perpendicular to $m$ by Lemma \ref{NoPerps2}, so $n_\ell=n_m$.
\end{proof}

\begin{lemma} \label{Sherk6}
Let $\ell$ and $m$ be perpendicular lines. Then $\ell$ intersects all lines of $\mathcal{G}$ except possibly the lines that are perpendicular to $m$.
\end{lemma}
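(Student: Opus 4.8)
The plan is to prove the contrapositive of the statement: given $\ell \perp m$, I would show that every line $g$ with $g \not\perp m$ must intersect $\ell$. By Axiom B2 the perpendicular lines $\ell$ and $m$ meet in a point $O$, and by Axiom B4 the line $\ell$ is the \emph{unique} perpendicular to $m$ through $O$. The strategy is to drop perpendiculars from the points of $g$ onto $m$ and show that their feet sweep out all of $m$, so that in particular $O$ occurs as a foot; the perpendicular realising $O$ is then forced by Axiom B4 to coincide with $\ell$ while passing through a point of $g$.

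First I would fix a line $g$ with $g \not\perp m$. (The case $g = \ell$ needs no argument, since $\ell \perp m$ places $g$ among the exceptional lines, and the case $g = m$ is immediate as $\ell$ already meets $m$ at $O$.) For each point $P \inc g$, Axiom B3 provides a perpendicular $g_P$ from $P$ to $m$, and by Axiom B2 this line meets $m$ in a point, its foot. Since $g \not\perp m$, Lemma \ref{NoPerps} applies to the pair $(g,m)$ and shows that distinct points of $g$ yield perpendiculars with distinct feet on $m$. Hence the feet form a set of exactly $n_g$ distinct points of $m$.

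Next I would invoke Lemma \ref{nPointsPerLineNotPerp}, which gives $n_g = n_m$ because $g \not\perp m$. As $m$ carries exactly $n_m$ points and the feet are $n_g = n_m$ distinct points of $m$, the feet exhaust every point of $m$. In particular $O$ is a foot, so there is a point $P_0 \inc g$ whose perpendicular $g_{P_0}$ is perpendicular to $m$ and meets $m$ at $O$. By the uniqueness clause of Axiom B4 we conclude $g_{P_0} = \ell$; since $g_{P_0}$ passes through $P_0 \inc g$, the line $\ell$ is incident with $P_0$, and so $\ell$ intersects $g$, as required.

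The main obstacle is the covering step: concluding from injectivity that the feet fill out all of $m$. This relies on finiteness, so that an injection of the $n_g$ points of $g$ into the $n_m = n_g$ points of $m$ is forced to be surjective; it is precisely here that Lemmas \ref{NoPerps} and \ref{nPointsPerLineNotPerp} combine. Once surjectivity onto $m$ is secured, identifying the foot at $O$ with $\ell$ through the uniqueness in Axiom B4 is routine.
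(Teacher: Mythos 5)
Your proof is correct and takes essentially the same approach as the paper: both fix a line $g\not\perp m$ and combine Axiom B3 with Lemmas \ref{NoPerps} and \ref{nPointsPerLineNotPerp} in a finiteness/pigeonhole argument that forces $\ell$ to pass through a point of $g$. The only cosmetic difference is that you count the $n_g$ distinct feet on $m$ and then identify the perpendicular at $O$ with $\ell$ via Axiom B4, whereas the paper counts the $n_g$ distinct perpendicular lines and cites Lemma \ref{NoPerps2} (whose proof is precisely that application of Axiom B4).
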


\begin{proof}
Let $g$ be a line that is not perpendicular to $m$. Then $n_g=n_m$ by Lemma \ref{nPointsPerLineNotPerp}. By Axiom B3 and Lemmas \ref{NoPerps} and \ref{NoPerps2}, there are exactly $n_g$ perpendiculars from the $n_g$ points of $g$ to $m$, and every perpendicular to $m$ arises in this way. Since $\ell$ is perpendicular to  $m$, the line $\ell$ intersects $g$.
\end{proof}

\begin{lemma} \label{PoleAllOrOne}
For a point $P$ and line $\ell$ such that $P$ is not incident with $\ell$, either every line incident with $P$ is perpendicular to $\ell$, or there is a unique perpendicular from  $P$ to $\ell$.
\end{lemma}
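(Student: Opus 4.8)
The plan is to prove the stated dichotomy by showing that the failure of uniqueness forces the extreme case. Axiom B3 guarantees at least one perpendicular from $P$ to $\ell$, so the only alternative to there being a unique perpendicular is that there exist two distinct perpendiculars $g$ and $h$ from $P$ to $\ell$. Assuming this, I would first record, using Axiom B2, the points $A$ and $B$ at which $g$ and $h$ respectively meet $\ell$; these are distinct, since two perpendiculars to $\ell$ through a common point of $\ell$ would coincide by Axiom B4. The goal then becomes to show that under this hypothesis every line incident with $P$ is perpendicular to $\ell$.

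The heart of the argument is a counting argument run along a hypothetical line through $P$ that is not perpendicular to $\ell$. Suppose for contradiction that $k$ is such a line. Since $k \not\perp \ell$, Lemma \ref{nPointsPerLineNotPerp} gives $n_k = n_\ell$, so $k$ carries exactly $n_\ell$ points, one of which is $P$. At $P$ we already have the two perpendiculars $g$ and $h$ to $\ell$, meeting $\ell$ in the distinct points $A$ and $B$; from each of the remaining $n_\ell - 1$ points of $k$, Axiom B3 supplies at least one perpendicular to $\ell$. Crucially, because $k \not\perp \ell$, Lemma \ref{NoPerps} applies to every pair of distinct points of $k$ and guarantees that perpendiculars dropped to $\ell$ from distinct points of $k$ always meet $\ell$ in distinct points. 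Collecting $A$ and $B$ together with one such intersection point from each of the other $n_\ell - 1$ points of $k$ therefore produces $n_\ell + 1$ pairwise distinct points of $\ell$, which is absurd since $\ell$ has only $n_\ell$ points.

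I expect the main obstacle to be spotting the mechanism behind this contradiction: the existence of two perpendiculars from the single point $P$ effectively makes $P$ contribute two intersection points on $\ell$ where Lemma \ref{NoPerps} would normally permit only one per point of $k$, so travelling along $k$ overshoots the capacity $n_\ell$ of $\ell$. Once one recognises that Lemma \ref{NoPerps} is exactly the tool that forces all of these intersection points to be distinct, the count closes immediately. Since no line $k$ through $P$ can fail to be perpendicular to $\ell$, every line incident with $P$ is perpendicular to $\ell$, and the required dichotomy follows.
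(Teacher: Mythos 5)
Your proof is correct and takes essentially the same route as the paper: both reduce to a hypothetical line through $P$ that is not perpendicular to $\ell$ and run a counting argument along it using Lemmas \ref{nPointsPerLineNotPerp} and \ref{NoPerps}. The only difference is the endgame — the paper notes that the $n$ perpendiculars from the points of that line exhaust the points of $\ell$, forces the second perpendicular from $P$ to coincide with one of them via Axiom B4, and then derives a contradiction through Axiom A*, whereas you count $n_\ell + 1$ pairwise distinct feet on $\ell$ directly; this is a cosmetic variation of the same pigeonhole.
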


\begin{proof}
Suppose that some line $m$ is incident with $P$ but not perpendicular to $\ell$. By Lemma \ref{nPointsPerLineNotPerp}, $n_\ell=n_m$. Write $n=n_\ell$. Let $P_1, \ldots ,P_n$ be the points on $m$, and let $h_i$ be a perpendicular from $P_i$ to $\ell$ for $1\le i\le n$.
Then for all $i\ne j$, the lines $h_i$ and $h_j$ are distinct and meet $\ell$ in distinct points by Lemma \ref{NoPerps}.
 Say $P=P_i$. If
$h_i$ is the unique perpendicular from $P$ to $\ell$, then we are done, so suppose that $g$ is another perpendicular from $P$ to $\ell$. Since $\ell$ has $n$ points, Axiom B4 implies that $g=h_j$ for some $j \neq i$. Then $P_i$ and $P_j$ are both incident with $g$ and $m$, so $g=m$ by Axiom A*, in which case $m$ is perpendicular to $\ell$, a contradiction.
\end{proof}

In the case where a point $P$ is not incident with a line $\ell$ and there is more than one perpendicular from $P$ to $\ell$, we say that \emph{$P$ is a pole of $\ell$} and \emph{$\ell$ is a polar of $P$}. 

\begin{lemma}\label{polesperline}
Let $\ell$ and $h$ be perpendicular lines intersecting at a point $P$. Then the
number of poles of $\ell$ on $h$ equals the number of polars of $P$.  
\end{lemma}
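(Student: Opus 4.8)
The plan is to exhibit an explicit bijection between the polars of $P$ and the poles of $\ell$ on $h$. For the forward direction I would send a polar $m$ of $P$ to the point $Q := m \cap h$, and for the reverse direction I would send a pole $Q$ of $\ell$ lying on $h$ to the unique perpendicular $m$ to $h$ at $Q$ (which exists by Axiom B4). Most of the effort goes into showing that these two maps are well defined, i.e.\ that they really land in the claimed sets; once that is done, checking that they are mutually inverse is routine via Axioms A* and B4.

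For the forward map, let $m$ be a polar of $P$, so that $P$ is a pole of $m$ and hence (by Lemma \ref{PoleAllOrOne}) every line through $P$ --- in particular both $\ell$ and $h$ --- is perpendicular to $m$. By Axiom B2 the perpendicular lines $m$ and $h$ meet in a point $Q$, and $Q \neq P$ since $P$ is not incident with $m$; as $\ell$ and $h$ meet only at $P$ by Axiom A*, also $Q \notin \ell$. Now $h$ and $m$ are two perpendiculars from $Q$ to $\ell$ (using $h \perp \ell$ and, via Axiom B1, $m \perp \ell$), and they are distinct because $P$ lies on $h$ but not on $m$. Hence $Q$ is a pole of $\ell$ lying on $h$, as required.

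The reverse map is where the main obstacle lies. Given a pole $Q$ of $\ell$ on $h$, set $m$ to be the perpendicular to $h$ at $Q$. One checks that $P \notin m$: otherwise $m$ and $h$ would share the two points $P$ and $Q$, forcing $m = h$ by Axiom A* and contradicting Lemma \ref{NoSelfPerp}. Since $Q$ is a pole of $\ell$, every line through $Q$ --- in particular $m$ --- is perpendicular to $\ell$, so $\ell \perp m$; and $h \perp m$ by construction. The delicate point is to upgrade these two perpendicularities at $P$ to \emph{all} lines through $P$, i.e.\ to show that $P$ is a pole of $m$. This is exactly where Lemma \ref{PoleAllOrOne} does the work: applied to the point $P$ and the line $m$ (with $P \notin m$), it asserts that either every line through $P$ is perpendicular to $m$, or there is a \emph{unique} perpendicular from $P$ to $m$. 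But $\ell$ and $h$ are two distinct perpendiculars from $P$ to $m$ (distinct since $\ell \perp h$ forbids $\ell = h$ by Lemma \ref{NoSelfPerp}), so the second alternative is impossible. Therefore every line through $P$ is perpendicular to $m$, which is to say $m$ is a polar of $P$.

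Finally I would verify that the two maps compose to the identity in both orders: starting from a polar $m$ of $P$, the perpendicular to $h$ at $Q = m \cap h$ is $m$ itself by Axiom B4, and starting from a pole $Q$, the perpendicular $m$ to $h$ at $Q$ meets $h$ precisely in $Q$ by Axiom A*. This establishes the bijection and hence the claimed equality of counts. The crux is the reverse direction, and the key realisation is that Lemma \ref{PoleAllOrOne} converts the mere existence of two perpendiculars from $P$ into the full pole condition, sparing us any case analysis of the individual lines through $P$.
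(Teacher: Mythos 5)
Your proof is correct and takes essentially the same approach as the paper: both exhibit the bijection sending a polar of $P$ to its intersection with $h$, with inverse given by the unique perpendicular to $h$ at a pole of $\ell$, and both rely on Lemma \ref{PoleAllOrOne} together with Axioms B2, B4 and A*. Your write-up is simply a more detailed version of the paper's argument (which establishes injectivity via B4 and surjectivity via the same reverse construction you describe), so no changes are needed.
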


\begin{proof}
 By Lemma \ref{PoleAllOrOne}, every  polar $g$ of $P$ is perpendicular to $\ell$ and $h$, so $g$ intersects $h$ in a point $Q_g$, and $Q_g$ is a pole of $\ell$. Thus we have a map from the set of polars of $P$ to the set of poles of $\ell$ on $h$ defined by $g\mapsto Q_g$, and this map is injective by Axiom B4. Now for a pole $Q$ of $\ell$ on $h$,  the unique perpendicular to $h$ at $Q$ is a polar of $P$ by Lemma \ref{PoleAllOrOne}, so this map is a bijection. 
\end{proof}

\begin{lemma}\label{collinear}
Let $P$ and $Q$ be two distinct points such that $P$ is not a pole for any line through $Q$, and $Q$ is not a pole for any 
line through $P$. Then $r_P=r_Q$.
\end{lemma}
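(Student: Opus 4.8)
The plan is to exhibit a bijection between the set $\mathcal{L}_P$ of lines incident with $P$ and the set $\mathcal{L}_Q$ of lines incident with $Q$, realised by ``dropping a perpendicular''. The hypotheses are exactly what is needed to invoke Lemma~\ref{PoleAllOrOne}: since $Q$ is not a pole of any line through $P$, every line $\ell$ in $\mathcal{L}_P$ not incident with $Q$ admits a \emph{unique} perpendicular from $Q$, which I denote $\phi(\ell)$; symmetrically, $P$ has a unique perpendicular to each line of $\mathcal{L}_Q$ not incident with $P$. These uniqueness statements are what make $\phi$ single-valued, and the symmetric statement is what will supply preimages.

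First I would treat the case in which $P$ and $Q$ lie on no common line. Then no line of $\mathcal{L}_P$ is incident with $Q$, so $\phi\colon\mathcal{L}_P\to\mathcal{L}_Q$ is defined on all of $\mathcal{L}_P$. For injectivity, suppose $\phi(\ell_1)=\phi(\ell_2)=m$; then $\ell_1$ and $\ell_2$ are both incident with $P$ and perpendicular to $m$ (using Axiom~B1). If $P\inc m$, then Axiom~B4 forces $\ell_1=\ell_2$; if $P$ is not incident with $m$, then two distinct perpendiculars from $P$ to $m$ would make $P$ a pole of the line $m\in\mathcal{L}_Q$, contradicting the hypothesis, so again $\ell_1=\ell_2$. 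For surjectivity, given $m\in\mathcal{L}_Q$ I drop the unique perpendicular $\ell$ from $P$ to $m$ (which exists, as $P$ is not a pole of $m$) and check that $\phi(\ell)=m$. Hence $r_P=r_Q$.

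The substantive case is when $P$ and $Q$ are joined by a line $t$. Here $t$ is the only line common to $\mathcal{L}_P$ and $\mathcal{L}_Q$, and I would restrict $\phi$ to $\mathcal{L}_P\setminus\{t\}$ (on which it is again well-defined, since those lines are not incident with $Q$). The same two arguments show $\phi$ is injective into $\mathcal{L}_Q$. The key bookkeeping is to identify the image precisely. Writing $t_P$ and $t_Q$ for the perpendiculars to $t$ at $P$ and at $Q$ respectively (distinct from $t$ by Lemma~\ref{NoSelfPerp}), I expect to show $\phi(t_P)=t$, and that a line $m\in\mathcal{L}_Q$ fails to lie in the image exactly when the perpendicular from $P$ to $m$ equals $t$ --- which by Axiom~B4 happens precisely for $m=t_Q$. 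Thus $\phi$ restricts to a bijection $\mathcal{L}_P\setminus\{t\}\to\mathcal{L}_Q\setminus\{t_Q\}$, giving $r_P-1=r_Q-1$. The main obstacle is exactly this last accounting: verifying that the single ``extra'' line $t_P$ on the $P$-side is carried onto $t$, while $t_Q$ is the unique omission on the $Q$-side, so that the two off-by-one corrections cancel.
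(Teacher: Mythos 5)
Your proof is correct, and it rests on the same underlying mechanism as the paper's: drop the unique perpendicular from $Q$ onto each line through $P$, with the non-pole hypothesis providing both well-definedness and injectivity. Where you differ is in the execution. The paper defines the map $m\mapsto\ell_m$ on \emph{every} line $m$ through $P$, including the possible common line $t$: when $Q$ is incident with $m$, Axiom B4 (rather than the pole hypothesis) supplies the unique perpendicular from $Q$ to $m$. This makes any case split on collinearity unnecessary. The paper then proves only injectivity, concluding $r_P\le r_Q$, and gets the reverse inequality by swapping the roles of $P$ and $Q$ --- so surjectivity is never needed. Your version instead restricts the domain to $\mathcal{L}_P\setminus\{t\}$, which forces the case analysis and the off-by-one accounting ($\phi(t_P)=t$, with $t_Q$ the unique omission) that you flag as the main obstacle; those verifications do all go through (for instance, $t_Q$ cannot be hit, since a preimage $\ell\ne t$ would give two perpendiculars $\ell$ and $t$ from $P$ to $t_Q$, making $P$ a pole of a line through $Q$, contrary to hypothesis). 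The trade-off: your argument produces an explicit bijection, hence slightly more structural information, while the paper's total-map-plus-symmetry device is shorter and eliminates the bookkeeping entirely.
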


\begin{proof}
Let $m$ be a line through $P$. Since $Q$ is not a pole of $m$, there is a unique perpendicular $\ell_m$ from $Q$ to $m$. 
Thus we have a map from the set of lines on $P$ to the set of lines on $Q$ defined by $m\mapsto \ell_m$. If 
$\ell_{m_1}=\ell_{m_2}$ for lines $m_1$ and $m_2$ on $P$, then $\ell_{m_1}$ is perpendicular to $m_1$ and $m_2$, so either 
$m_1=m_2$, or $P$ is a pole of $\ell_{m_1}$. This latter possibility contradicts our assumption, so the map is injective 
and $r_P\le r_Q$. By reversing the roles of $P$ and $Q$, we also have that $r_Q\le r_P$.
\end{proof}

\begin{lemma}\label{constant_on_line_perp}
Let $P$ and $Q$ be distinct points on a line $\ell$, and let $m_P$ and $m_Q$ be the perpendiculars to $\ell$ at $P$ and $Q$ respectively. If $m_P$ and $m_Q$ are not perpendicular, then $r_P=r_Q$.
\end{lemma}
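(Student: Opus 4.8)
The plan is to deduce this from Lemma~\ref{collinear}. That lemma already gives $r_P=r_Q$ as soon as we know that $P$ is not a pole of any line through $Q$ and, symmetrically, that $Q$ is not a pole of any line through $P$. So the entire task reduces to verifying these two non-pole conditions from the hypothesis $m_P\not\perp m_Q$, and everything will hinge on the uniqueness of perpendiculars.

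The key step I would carry out is to pin down exactly which lines through $Q$ could conceivably have $P$ as a pole. Suppose $P$ is a pole of some line $g$ incident with $Q$. Then $P$ is not incident with $g$, and by Lemma~\ref{PoleAllOrOne} every line through $P$ is perpendicular to $g$; in particular $\ell\perp g$, since $\ell\inc P$. But $g$ and $\ell$ both pass through $Q$, so $g$ is a perpendicular to $\ell$ at $Q$, and the uniqueness in Axiom~B4 forces $g=m_Q$. Thus $m_Q$ is the \emph{only} candidate line through $Q$ of which $P$ might be a pole.

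It then remains to rule out $m_Q$ itself. First, $P$ is not incident with $m_Q$: otherwise $P$ and $Q$ would both lie on $m_Q$ and on $\ell$, forcing $m_Q=\ell$ by Axiom~A*, which contradicts Lemma~\ref{NoSelfPerp} because $m_Q\perp\ell$. Now if $P$ were a pole of $m_Q$, then by Lemma~\ref{PoleAllOrOne} every line through $P$ would be perpendicular to $m_Q$; applying this to the line $m_P$ would give $m_P\perp m_Q$, contradicting the hypothesis. Hence $P$ is a pole of no line through $Q$.

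Finally, since perpendicularity is symmetric by Axiom~B1, the hypothesis also reads $m_Q\not\perp m_P$, so interchanging the roles of $P$ and $Q$ in the argument above shows that $Q$ is a pole of no line through $P$. Lemma~\ref{collinear} then yields $r_P=r_Q$. I expect the main obstacle to be the middle step, namely identifying $m_Q$ as the unique possible polar of $P$ through $Q$ by combining Lemma~\ref{PoleAllOrOne} with the B4-uniqueness of perpendiculars; once that identification is in place, the hypothesis $m_P\not\perp m_Q$ closes the argument immediately.
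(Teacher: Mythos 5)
Your proof is correct and follows essentially the same route as the paper's: both reduce to Lemma~\ref{collinear} by using Lemma~\ref{PoleAllOrOne} together with the uniqueness in Axiom~B4 to identify any would-be polar through one point with the perpendicular $m_P$ (or $m_Q$), and then invoke Lemma~\ref{PoleAllOrOne} again to force $m_P\perp m_Q$, contradicting the hypothesis. The only cosmetic differences are that you start by supposing $P$ is a pole of a line through $Q$ (the paper starts with $Q$ and a line through $P$) and you include a non-incidence check that is already implied by the definition of a pole.
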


\begin{proof}
Suppose that $Q$ is a pole of a line $m$ through $P$. Now every line through $Q$ is perpendicular to $m$ by Lemma \ref{PoleAllOrOne}; 
in particular, $\ell$ is perpendicular to $m$. Since $m_P$ and $m$ are both perpendicular to $\ell$ at $P$, it follows that $m_P=m$. Now $m_Q$ is a line through $Q$, so $m_Q$ is perpendicular to $m=m_P$ by Lemma \ref{PoleAllOrOne}, a contradiction. Thus $Q$ is not the pole of any line through $P$, and by symmetry, 
$P$ is not the pole of any line through $Q$. By Lemma \ref{collinear}, $r_P=r_Q$. 
\end{proof}

\subsection{Assuming the thickness condition.}
\label{s:}
We now assume that some line is incident only with thick points.

\begin{lemma}
\label{thickpoints}
Every point is thick.
\end{lemma}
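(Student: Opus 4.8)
The plan is to argue by contradiction, so suppose some point $T$ is thin, and let $\ell_0$ be a line all of whose points are thick. Since every point lies on at least one line and $r_T$ is even by Lemma~\ref{rEven}, being thin forces $r_T=2$; moreover the two lines $a,b$ through $T$ are perpendicular, because the perpendicular to $a$ at $T$ provided by Axiom~B4 is a line through $T$ different from $a$ (by Lemma~\ref{NoSelfPerp}) and hence equals $b$. As $T$ is thin it cannot lie on $\ell_0$, so Lemma~\ref{PoleAllOrOne} leaves exactly two possibilities: either there is a unique perpendicular from $T$ to $\ell_0$, or $T$ is a pole of $\ell_0$.

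In the first case I would drop the perpendicular $m$ from $T$ to $\ell_0$ and let $Y$ be its foot, a thick point of $\ell_0$. Applying Lemma~\ref{constant_on_line_perp} to $m$ with the points $T$ and $Y$, the perpendicular to $m$ at $Y$ is $\ell_0$, while the perpendicular to $m$ at $T$ is some line $m_T$; were $m_T$ perpendicular to $\ell_0$ it would be a second perpendicular from $T$ to $\ell_0$ (distinct from $m$ by Lemma~\ref{NoSelfPerp}), contradicting that $T$ is not a pole. Hence $m_T\not\perp\ell_0$, and Lemma~\ref{constant_on_line_perp} yields $r_T=r_Y\ge 3$, contradicting $r_T=2$. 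This case is routine.

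The case in which $T$ is a thin pole of $\ell_0$ is the crux. Here $a$ and $b$ are both perpendicular to $\ell_0$, meeting it in distinct thick points $Z_a$ and $Z_b$. The first step is to show that these feet are themselves poles: for any line $c$ through $Z_a$ other than $a$ and $\ell_0$, Axiom~B3 gives a perpendicular from $T$ to $c$ lying in $\{a,b\}$, while $c\not\perp a$ because the unique perpendicular to $a$ at $Z_a$ is $\ell_0$; hence $b\perp c$. Thus $b$ is perpendicular to every line through $Z_a$, so $Z_a$ is a pole of $b$, and symmetrically $Z_b$ is a pole of $a$. Testing a line through $Z_a$ perpendicular to neither $a$ nor $\ell_0$ against Lemma~\ref{nPointsPerLineNotPerp} then gives $n_a=n_{\ell_0}$, while the pole relation forces $r_{Z_b}\le n_a$; in particular $\ell_0$ has at least three points.

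I expect this final case to be the main obstacle, precisely because both constancy tools fail for the pair $T,Z_a$: Lemma~\ref{constant_on_line_perp} does not apply since the perpendiculars $b$ and $\ell_0$ to the line $a$ at $T$ and $Z_a$ are themselves perpendicular, and Lemma~\ref{collinear} does not apply since $Z_a$ is a pole of the line $b$ through $T$. The contradiction must therefore be wrung from the rigid self-polar configuration $T,Z_a,Z_b$ itself. My plan is to take a third point $V$ of $\ell_0$, with perpendicular $p\ne a,b$ to $\ell_0$ at $V$; Axiom~B3 forces $a\perp p$ or $b\perp p$, which in turn makes $Z_a$ or $Z_b$ a pole of $p$ as well, so that one foot is simultaneously the pole of two distinct lines. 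I would then count poles and polars through this configuration using Lemma~\ref{polesperline}, or drive the equalities coming from Lemma~\ref{nPointsPerLineNotPerp} to a numerical impossibility, contradicting the thickness of the points of $\ell_0$. Identifying exactly which count collapses is the delicate part of the argument.
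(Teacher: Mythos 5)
Your Case 1 (a unique perpendicular from $T$ to $\ell_0$) is correct and complete. The genuine gap is in Case 2, where $T$ is a pole of $\ell_0$: you stop short of any contradiction (``identifying exactly which count collapses is the delicate part''), and the one concrete step of your plan is unjustified. Knowing $b\perp p$ does not make $Z_a$ a pole of $p$: the fact that $Z_a$ is a pole of $b$ gives many perpendiculars from $Z_a$ to $b$, but says nothing about perpendiculars from $Z_a$ to $p$ beyond the single line $\ell_0$, and Lemma~\ref{PoleAllOrOne} cannot be invoked to upgrade one perpendicular to all of them without first exhibiting a second one. So the case you yourself identify as the crux is left as a speculative counting plan whose first move already needs an argument you do not supply.

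The irony is that you have already assembled everything needed to close Case 2 by the same mechanism as your Case 1. You showed that every line $c$ through $Z_a$ other than $a$ and $\ell_0$ satisfies $c\perp b$ and $c\not\perp a$, and by Axiom~B4 at $Z_a$ (where $a$ is the unique perpendicular to $\ell_0$) also $c\not\perp \ell_0$. Take one such $c$, which exists since $Z_a$ is thick, and let $S=c\cap b$. The points $T$, $S$, $Z_b$ of $b$ are pairwise distinct, and their perpendiculars to $b$ are $a$, $c$, $\ell_0$ respectively. Lemma~\ref{constant_on_line_perp} applied to the pair $(T,S)$, using $a\not\perp c$, gives $r_T=r_S$; applied to the pair $(S,Z_b)$, using $c\not\perp\ell_0$, it gives $r_S=r_{Z_b}\ge 3$, contradicting $r_T=2$. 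This two-step transport along $b$ is exactly how the paper finishes; in fact the paper runs this single argument (a third line at the thick foot of one perpendicular, forced to be perpendicular to the other line through the thin point, followed by two applications of Lemma~\ref{constant_on_line_perp}) without ever splitting into your pole/non-pole dichotomy, so the detour through Lemmas~\ref{polesperline} and \ref{nPointsPerLineNotPerp} is unnecessary.
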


\begin{proof}
Let $\ell$ be the line that is incident only with thick points. Suppose for a contradiction that there exists a point $P$ such that $r_P = 2$. Let $g$ and $h$ be the two lines incident with $P$. By Axiom B3, every line other than $g$ and $h$ must be perpendicular to at least one of $g$ or $h$. Without loss of generality,  suppose that $\ell$ is perpendicular to $g$, and let $R$ be the intersection point of  $\ell$ and $g$. Since $r_R\ge 3$, there is some 
other line $x$ on $R$,  and by Axiom B4, $x$ is not perpendicular 
to $g$, so $x$ must be perpendicular to $h$; let $S$ be the intersection point of $x$ and $h$. Since $x$ and $g$ are both perpendicular to $h$, $R$ is a pole of $h$. By Lemma \ref{PoleAllOrOne}, $\ell$ must be 
perpendicular to $h$ at a point $Q$. Since $g$ is the perpendicular to $h$ at $P$ and $x$ is the perpendicular to $h$ at $S$, Lemma \ref{constant_on_line_perp} implies that $r_P =r_S= r_Q$, but $r_Q\ge 3$, a contradiction.
\end{proof}

\begin{lemma} \label{nPointsPerLine}
The number of points incident with any line is constant.
\end{lemma}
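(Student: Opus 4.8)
The plan is to prove that any two lines carry the same number of points, so that $n_\ell$ is a genuine constant. Lemma~\ref{nPointsPerLineNotPerp} already settles this for any pair of \emph{non-perpendicular} lines, so the whole difficulty is concentrated in the perpendicular case: given $\ell \perp m$, I must still conclude $n_\ell = n_m$, and here Lemma~\ref{nPointsPerLineNotPerp} does not apply directly.

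My strategy is to route around the perpendicularity by producing a single bridge line $g$ that is perpendicular to neither $\ell$ nor $m$; then two applications of Lemma~\ref{nPointsPerLineNotPerp} give $n_\ell = n_g = n_m$. To build $g$, let $P$ be a point at which $\ell$ and $m$ meet, which exists by Axiom B2. Since every point is thick by Lemma~\ref{thickpoints} and $r_P$ is even by Lemma~\ref{rEven}, we have $r_P \ge 4$, so there is a line $g$ incident with $P$ and distinct from both $\ell$ and $m$.

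The key step is then to verify that this $g$ is perpendicular to neither $\ell$ nor $m$, and this follows purely from the uniqueness in Axiom B4. The unique perpendicular to $\ell$ at $P$ is $m$, so if $g \perp \ell$ then $g = m$, a contradiction; symmetrically, the unique perpendicular to $m$ at $P$ is $\ell$, so $g \perp m$ would force $g = \ell$, again a contradiction. Hence $g \not\perp \ell$ and $g \not\perp m$, and invoking Lemma~\ref{nPointsPerLineNotPerp} twice completes the argument. Since an arbitrary pair of lines is either perpendicular or not, both cases now yield equal point counts, and the lemma follows.

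I expect the only real obstacle to be the perpendicular case, and the crux is recognising that the thickness hypothesis, upgraded to $r_P \ge 4$ by the parity Lemma~\ref{rEven}, guarantees a third direction through $P$ that Axiom B4 then forces to be non-perpendicular to both $\ell$ and $m$. Without thickness a point could have exactly the two lines $\ell$ and $m$ through it, leaving no bridge line available; this is precisely where Lemma~\ref{thickpoints} enters the proof.
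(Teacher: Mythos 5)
Your proposal is correct and follows essentially the same route as the paper: split into the non-perpendicular case (immediate from Lemma~\ref{nPointsPerLineNotPerp}) and the perpendicular case, where a third line $g$ through the intersection point $P$, guaranteed by Lemma~\ref{thickpoints}, is forced by Axiom B4 to be non-perpendicular to both $\ell$ and $m$, giving $n_\ell = n_g = n_m$. The only cosmetic difference is that you invoke Lemma~\ref{rEven} to get $r_P \ge 4$, whereas $r_P \ge 3$ from thickness alone already suffices to produce $g$, which is what the paper uses.
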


\begin{proof}
Let $\ell$ and $m$ be lines. If $\ell$ is not perpendicular to $m$, then $n_\ell=n_m$ by Lemma \ref{nPointsPerLineNotPerp}.  Otherwise, $\ell$ is perpendicular to $m$, in which case $\ell$ and $m$ intersect at a point $P$. 
Since $r_P\ge 3$ by Lemma \ref{thickpoints}, there exists a line $g$ that is  incident with $P$ and distinct from $\ell$ and $m$. Now $g$ is not perpendicular to $\ell$ or $m$ by Axiom  B4, so $n_\ell=n_g=n_m$ by Lemma \ref{nPointsPerLineNotPerp}. 
\end{proof}

Let $n$ denote the number of points on any line.
We have the following consequence of Lemmas \ref{NonCollinear} and  \ref{nPointsPerLine}.

\begin{corollary} \label{nGeq3}
$n \ge 3$.
\end{corollary}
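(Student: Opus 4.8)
The plan is to combine the two lemmas cited in the statement directly; no new geometric argument is required, since all of the substantive work has already been carried out in Section~\ref{s:basics} and in Lemma~\ref{nPointsPerLine}. First I would invoke Lemma~\ref{NonCollinear}, which guarantees the existence of a line incident with at least three points. Calling such a line $\ell$, this gives $n_\ell \ge 3$. Next I would appeal to Lemma~\ref{nPointsPerLine}, which asserts that the number of points on a line is independent of the chosen line: every line is incident with exactly $n$ points, where $n$ is the common value introduced in the sentence immediately preceding the corollary. In particular $n = n_\ell$, and therefore $n \ge 3$.

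Since both ingredients are already in place, there is no genuine obstacle here; the corollary is essentially a bookkeeping step that records, for later use, the numerical consequence of Lemmas~\ref{NonCollinear} and~\ref{nPointsPerLine}. The only point meriting a moment's attention is that the constant $n$ is well defined, i.e.\ that the lower bound established for the one specific line $\ell$ transfers to \emph{every} line and hence to the global constant; but this is exactly what Lemma~\ref{nPointsPerLine} provides, so the transfer is immediate once that lemma is granted.
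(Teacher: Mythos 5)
Your proof is correct and is exactly the paper's argument: the paper states the corollary as an immediate consequence of Lemma~\ref{NonCollinear} (some line has at least three points) and Lemma~\ref{nPointsPerLine} (the number of points per line is constant), which is precisely the combination you use.
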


\begin{lemma} \label{N2N3}
Axioms $N2$ and $N3$ hold.
\end{lemma}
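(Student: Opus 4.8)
The plan is to verify the two axioms separately, and both should fall out quickly from three facts established earlier: every point is thick (Lemma \ref{thickpoints}), every line carries $n \ge 3$ points (Lemma \ref{nPointsPerLine} together with Corollary \ref{nGeq3}), and Axiom A*. The common mechanism is that two distinct lines meet in at most one point, so a long line through a point automatically supplies many points or lines avoiding a prescribed object.

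For Axiom N2, I would fix a line $\ell$ and choose a point $P$ incident with it (it has $n \ge 3$ such points by Corollary \ref{nGeq3}). Since $P$ is thick by Lemma \ref{thickpoints}, there is a second line $m$ through $P$ with $m \neq \ell$. The line $m$ also has $n \ge 3$ points, and no point of $m$ other than $P$ can lie on $\ell$: otherwise $m$ and $\ell$ would share two distinct points and hence coincide by Axiom A*, contradicting $m \neq \ell$. Thus the $n-1 \ge 2$ points of $m$ distinct from $P$ all fail to be incident with $\ell$, which gives the two required points and establishes Axiom N2.

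For Axiom N3, I would fix a point $P$; thickness (Lemma \ref{thickpoints}) guarantees a line $\ell$ through $P$, and $\ell$ has a point $Q \neq P$ because $n \ge 3$ by Corollary \ref{nGeq3}. The point $Q$ is likewise thick, so $r_Q \ge 3$, and therefore at least two lines pass through $Q$ other than $\ell$. None of these can be incident with $P$, for a line through both $Q$ and $P$ would coincide with $\ell$ by Axiom A*. These two lines through $Q$ are then not incident with $P$, which is exactly Axiom N3.

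I do not anticipate a genuine obstacle in this lemma: once every point is thick and every line is long, Axiom A* alone forces enough non-incidences. The only points requiring care are to invoke the already-proved facts (that lines have at least three points and that every point is thick) rather than re-deriving them, and to phrase the ``two shared points force equality of lines'' step cleanly through Axiom A* so that the contradictions are airtight.
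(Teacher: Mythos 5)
Your proposal is correct and follows essentially the same argument as the paper: for N2, take a second line through a point of $\ell$ and use $n\ge 3$ with Axiom A*; for N3, take a second point $Q$ on a line through $P$ and use thickness of $Q$ with Axiom A*. No gaps.
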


\begin{proof}
Let $\ell$ be a line. There exists a point $P$ on $\ell$, and there is another line $g$ incident with $P$. By Corollary \ref{nGeq3}, there are at least two additional points on $g$, and by Axiom A* they are not on $\ell$. Thus Axiom N2 holds. Let $P'$ be a point. There exists a line $\ell'$ incident with $P'$. By Corollary \ref{nGeq3}, there exists a point $Q$ on $\ell'$ not equal to $P'$, and since $r_{Q}\ge 3$ by Lemma \ref{thickpoints}, there are two lines other than $\ell'$ incident with $Q$, which by Axiom A* are not incident with $P'$. Thus Axiom N3 holds.
\end{proof}

\begin{lemma}\label{constant_r}
The number of lines through  any point is constant.
\end{lemma}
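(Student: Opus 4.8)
The plan is to prove the equivalent statement $r_P=r_Q$ for every pair of points $P,Q$, using Lemma~\ref{collinear} as the main engine. That lemma already yields $r_P=r_Q$ whenever neither point is a pole of a line through the other, so the whole problem is confined to the \emph{exceptional pairs}: those for which (say) $P$ is a pole of some line $\ell$ incident with $Q$. I would first unpack such a configuration. If $P$ is a pole of $\ell$, then by Lemma~\ref{PoleAllOrOne} every one of the $r_P$ lines through $P$ is perpendicular to $\ell$, and since two distinct lines through $P$ cannot meet $\ell$ in the same point (Axiom~B4), these lines have $r_P$ distinct feet on $\ell$, each being the unique perpendicular to $\ell$ at that foot. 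In particular $Q$ is one of these feet and $h:=PQ$ is the perpendicular to $\ell$ at $Q$, so $\ell\perp h$.

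My plan for an exceptional pair is to transfer the comparison onto the line $h$ and propagate $r$ along it with Lemma~\ref{constant_on_line_perp}. That lemma gives $r_A=r_B$ for points $A,B$ on $h$ whenever the perpendiculars to $h$ at $A$ and $B$ are not themselves perpendicular. The perpendicular to $h$ at $Q$ is $\ell$, while the perpendicular to $h$ at $P$ is a line through $P$ and hence (as $P$ is a pole of $\ell$) perpendicular to $\ell$; thus the \emph{direct} comparison of $P$ and $Q$ is obstructed in precisely the manner that flags a pole. To get around this I would bring in the remaining $n-2\ge 1$ points of $h$ guaranteed by Corollary~\ref{nGeq3}, form the graph on the $n$ points of $h$ in which two points are adjacent when the perpendiculars to $h$ at them are not mutually perpendicular, and observe that $r$ is constant along every edge. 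It then suffices to show that $P$ and $Q$ lie in one connected component of this graph.

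The step I expect to be the main obstacle is controlling the non-edges, i.e.\ the pairs of perpendiculars to $h$ that are mutually perpendicular: two such lines meet off $h$ in a common pole of $h$, so the non-edges are governed by the poles of $h$. Here I would invoke Lemma~\ref{polesperline}, which equates the number of poles of $h$ on a perpendicular with the number of polars of the corresponding point, to limit how many points of $h$ can be mutually disconnected, and I would lean on the thickness hypothesis (every point is thick by Lemma~\ref{thickpoints}) to supply, at each relevant foot, enough further lines to force the missing connections; the fact that perpendicular lines meet every non-perpendicular line (Lemma~\ref{Sherk6}) makes the incidence structure rich enough that the same device also connects points lying on no common line, reducing the non-collinear case to the collinear one. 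Pinning down this pole count so that the per-line graph is always connected is the delicate part; everything else is bookkeeping on top of Lemmas~\ref{collinear} and~\ref{constant_on_line_perp}.
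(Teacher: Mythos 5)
Your overall architecture (propagate $r$ along a line via Lemma~\ref{constant_on_line_perp}, then reduce arbitrary pairs to collinear ones) is the paper's, but there is a genuine gap at exactly the step you flag as ``the delicate part'': you never exhibit the connection between $P$ and $Q$ in your graph, and the tools you nominate would not supply it. Thickness ``at each relevant foot'' is the wrong place: by Axiom~B4 a point $A$ of $h$ lies on exactly \emph{one} perpendicular to $h$, so the extra lines through $A$ provided by Lemma~\ref{thickpoints} contribute no edges of your graph. The thickness that does the work is at the point $R$ where the two obstructing perpendiculars $m_P$ and $m_Q$ meet (they meet by Axiom~B2, and $R$ lies off $h$ by Axiom~A* and Lemma~\ref{NoSelfPerp}). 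This $R$ is a pole of $h$, so by Lemma~\ref{thickpoints} there is a third line $m_S$ through $R$, by Lemma~\ref{PoleAllOrOne} it is again perpendicular to $h$, with foot $S\neq P,Q$ by Axiom~B4; and Axiom~B4 applied at $R$ shows $m_S$ is perpendicular to neither $m_P$ nor $m_Q$ (the unique perpendicular to $m_P$ at $R$ is $m_Q$, and vice versa). So $S$ is a common neighbour of $P$ and $Q$, and Lemma~\ref{constant_on_line_perp} applied twice gives $r_P=r_S=r_Q$. That is the paper's entire argument: no global connectivity statement and no counting are needed. Your alternative plan of bounding non-edges via Lemma~\ref{polesperline} is, moreover, effectively circular: the non-neighbours of a vertex $A$ correspond to poles of $h$ on the perpendicular at $A$, hence to polars of $A$, and the only control the paper ever obtains on the number of polars of a point is Lemma~\ref{numberlines}, which is proved \emph{after} this lemma and uses the constant $r$ that it provides.

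A second, smaller gap: you assert that $Q$ ``is one of these feet,'' i.e.\ that an exceptional pair is automatically collinear. In a partial linear space this need not hold: the feet of the perpendiculars from $P$ are only $r_P$ of the points of $\ell$, and $Q$ may be one of the remaining points, in which case the line $h=PQ$ does not exist. The repair is the paper's closing step, and it uses Axiom~B3 rather than Lemma~\ref{Sherk6}: given any pair $P,Q$, take a line $m$ through $Q$ and a perpendicular $\ell$ from $P$ to $m$ meeting $m$ at a point $R$; then $(P,R)$ and $(R,Q)$ are collinear pairs, so the collinear case gives $r_P=r_R=r_Q$. Note also that once the common-neighbour argument above is in place, the collinear case needs no pole/non-pole dichotomy at all (the only case split is whether $m_P\perp m_Q$), so your preliminary decomposition via Lemma~\ref{collinear} into exceptional and non-exceptional pairs becomes an unnecessary detour.
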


\begin{proof}
 Let $P$ and $Q$ be distinct points on a line $\ell$. Let $m_P$ and $m_Q$ be the perpendiculars to $\ell$ at $P$ and $Q$ respectively. If $m_P$ and $m_Q$ are not perpendicular, then $r_P=r_Q$ by  Lemma \ref{constant_on_line_perp},  so we assume otherwise. Let $R$ be the point of intersection of $m_P$ and $m_Q$. Since $r_R\ge 3$ by Lemma \ref{thickpoints} and $R$ is a pole of the line $\ell$, Lemma \ref{PoleAllOrOne} implies that there exists a point $S$ on $\ell$ and a line $m_S$ on $R$ and $S$ such that $m_S$ is perpendicular to $\ell$, and 
$m_S$ is not perpendicular to $m_P$ or $m_Q$. By Lemma \ref{constant_on_line_perp}, $r_P=r_S=r_Q$. 

Now suppose that $P$ and $Q$ are non-collinear points.  There exists a line $m$ through $Q$, and by Axiom B3, there is a point $R$ and line $\ell$ such that $P$ and $R$ are incident with $\ell$ and $R$ is incident with $m$. Then $r_P=r_R=r_Q$ by the above argument.
\end{proof}

Let   $r$ denote the number of lines through any point.

\begin{lemma} \label{Npoles}
Every line has exactly $(n^2-v)/(r-1)$ poles.
\end{lemma}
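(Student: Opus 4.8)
The plan is to count in two ways the set $X$ of pairs $(P,m)$ where $P$ is a point not incident with $\ell$ and $m$ is a line incident with $P$ that is perpendicular to $\ell$. Writing $p$ for the number of poles of $\ell$, I expect this double count to collapse to a single linear equation in $p$ that rearranges to the claimed formula.

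First I would count $X$ by ranging over the perpendiculars to $\ell$. By Lemma \ref{NoPerps2} there are exactly $n$ lines perpendicular to $\ell$. Fix such a line $m$. Since $\ell \not\perp \ell$ by Lemma \ref{NoSelfPerp} we have $m \neq \ell$, and by Axioms B2 and A* the lines $m$ and $\ell$ meet in a unique point; hence exactly one of the $n$ points of $m$ lies on $\ell$ and the remaining $n-1$ lie off $\ell$. Each of these $n-1$ points contributes a pair to $X$, so $|X| = n(n-1)$.

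Next I would count $X$ by ranging over the $v-n$ points not on $\ell$. For each such point $P$, Axiom B3 guarantees at least one perpendicular from $P$ to $\ell$, and Lemma \ref{PoleAllOrOne} dichotomises the number of perpendiculars from $P$: either $P$ is a pole of $\ell$, in which case all $r$ lines through $P$ are perpendicular to $\ell$ (recall that $r$ is constant by Lemma \ref{constant_r}), or $P$ is not a pole and contributes a single perpendicular. Thus if $\ell$ has $p$ poles then $|X| = pr + (v-n-p)$.

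Equating the two counts gives $n(n-1) = p(r-1) + (v-n)$, and since $r \ge 3$ by the thickness condition the coefficient $r-1$ is nonzero, so solving yields $p = (n^2-v)/(r-1)$. The only point requiring care is the first count: one must confirm that every perpendicular to $\ell$ meets $\ell$ in exactly one point, so that precisely $n-1$ of its points lie off $\ell$, and observe that every pole is automatically among the $v-n$ points off $\ell$ by definition. I do not anticipate a serious obstacle; once Lemmas \ref{NoPerps2} and \ref{PoleAllOrOne} are in hand, this is a clean double count.
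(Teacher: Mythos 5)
Your proof is correct and is essentially identical to the paper's: the same double count of pairs $(P,m)$ with $P$ off $\ell$ and $m \perp \ell$ through $P$, counted once via the $n$ perpendiculars (Lemma \ref{NoPerps2}) giving $n(n-1)$, and once via the pole/non-pole dichotomy of Lemma \ref{PoleAllOrOne} giving $pr+(v-n-p)$. Your extra care in verifying that each perpendicular meets $\ell$ in exactly one point is a detail the paper leaves implicit, but the argument is the same.
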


\begin{proof}
Let $\ell$ be a line, and let $N$ be the number of poles of $\ell$. Let $t$ be the number of pairs $(P, m)$ such that $P$ is a point not on $\ell$, and $m$ is a line incident with $P$ and perpendicular to $\ell$. Recall that $v$ is the total number of points in $\mathcal{G}$. There are $N$ poles of $\ell$, each   incident with $r$ lines, and all such lines are perpendicular to $\ell$ by Lemma \ref{PoleAllOrOne}. Moreover, there are $v -n -N$ points not incident with $\ell$ that are not poles of $\ell$, each with exactly one perpendicular to $\ell$ by Lemma \ref{PoleAllOrOne}. Thus $t =Nr+ (v -n -N)$. On the other hand, there are $n$ lines perpendicular to $\ell$, each with $n-1$ points not on $\ell$, so $t = n(n-1)$. Hence $N=(n^2-v)/(r-1)$.
\end{proof}

We shall denote the number of poles of a line by $N$.

\begin{lemma}\label{numberlines}
Every point has exactly $(nr - b)/(r - 1)$ polars.
\end{lemma}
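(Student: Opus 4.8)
The plan is to mirror the dual‑counting argument of Lemma~\ref{Npoles}, counting incidences between a fixed point and the lines of the geometry, but now weighted by perpendicularity rather than point‑incidence. Let $P$ be a point, and let $M$ be the number of polars of $P$. First I would set up a count of pairs $(\ell, m)$ where $\ell$ is a line \emph{not} incident with $P$ and $m$ is a line incident with $P$ and perpendicular to $\ell$. The idea is that each such $\ell$ contributes to this count according to whether $P$ is a pole of $\ell$ (equivalently, $\ell$ is a polar of $P$) or not, using Lemma~\ref{PoleAllOrOne} exactly as in the proof of Lemma~\ref{Npoles}. If $P$ is a pole of $\ell$, then \emph{every} one of the $r$ lines through $P$ is perpendicular to $\ell$; otherwise there is a unique perpendicular from $P$ to $\ell$.

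The key steps, in order, are as follows. First I would establish that a line not through $P$ either is a polar of $P$ or admits a unique perpendicular from $P$; this is immediate from Lemma~\ref{PoleAllOrOne}. Next I would tally the pairs $(\ell, m)$ from the ``line'' side: the $M$ polars of $P$ each contribute $r$ perpendiculars through $P$, while the lines not through $P$ that are not polars each contribute exactly one, giving a first expression for the count. Here I must be careful to enumerate correctly the lines not incident with $P$: there are $b$ lines total and $r$ through $P$, so $b - r$ lines miss $P$, of which $M$ are polars and $b - r - M$ are not. This yields the count $Mr + (b - r - M)$. Then I would tally the same pairs from the ``point $P$'' side: each of the $r$ lines $m$ through $P$ is perpendicular to exactly $n_m = n$ lines by Lemma~\ref{NoPerps2} and Lemma~\ref{nPointsPerLine}, and by Axiom~B2 every line perpendicular to $m$ meets $m$, but only $m \cap \{\text{lines not through }P\}$ contribute—namely the $n-1$ perpendiculars to $m$ that meet $m$ at a point other than $P$, since the unique perpendicular to $m$ at $P$ itself passes through $P$ and is excluded. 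So each $m$ contributes $n - 1$, giving $r(n-1)$. Finally I would equate the two counts, $Mr + (b - r - M) = r(n-1)$, and solve for $M$, obtaining $M = (nr - b)/(r-1)$ after simplification.

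The main obstacle I anticipate is the bookkeeping in the second count: I must verify that the perpendicular to $m$ \emph{at} $P$ is the unique perpendicular to $m$ passing through $P$ (guaranteed by Axiom~B4 together with Lemma~\ref{NoSelfPerp}), so that exactly one of the $n$ lines perpendicular to $m$ is incident with $P$ and is correctly excluded from the pairs with $\ell \not\inc P$. I also need the perpendicularity relation to be symmetric (Axiom~B1) to identify ``$m \perp \ell$ with $m \inc P$'' with ``$\ell$ perpendicular to a line through $P$'', and I need the constancy of $n$ from Lemma~\ref{nPointsPerLine} and of $r$ from Lemma~\ref{constant_r} so that the per‑line and per‑point contributions are uniform. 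Provided these are handled cleanly, the arithmetic collapses to the stated formula, in precise analogy with Lemma~\ref{Npoles}.
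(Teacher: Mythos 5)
Your proof is correct and is essentially the paper's own double count: the paper counts pairs $(Q,\ell)$ where $\ell$ is a line not through $P$ and $Q$ is a point of $\ell$ whose perpendicular to $\ell$ passes through $P$, and these correspond bijectively (via Axioms B2, B4 and A*) to your pairs $(\ell,m)$ with $m \inc P$ and $m \perp \ell$. Both sides of your count, $Mr + (b-r-M)$ and $r(n-1)$, are exactly the paper's, derived from Lemma \ref{PoleAllOrOne} and Axiom B4 in the same way, so the arithmetic and conclusion coincide.
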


\begin{proof}
Let $P$ be a point, and let $M$ be the number of polars of $P$. Let $t$ be the number of pairs $(Q, \ell)$ such that $\ell$ is a line not incident with $P$, and $Q$ is a point incident with $\ell$ such that the perpendicular from $Q$ to $\ell$ is incident with $P$. Recall that $b$ is the total number of lines in $\mathcal{G}$. 
There are $M$ polars of $P$, and each such polar $\ell$ is incident with $r$ points $Q$ such that the perpendicular from $Q$ to $\ell$ is incident with $P$ by Lemma \ref{PoleAllOrOne}.
Moreover, there are $b-r-M$ lines not incident with $P$ that are not  polars of $P$, and for each such line $\ell$, there is a unique point $Q$ incident with $\ell$ such that the perpendicular from $Q$ to $\ell$ is incident with $P$ by Lemma \ref{PoleAllOrOne}. Thus $t= Mr+ (b-r-M)$. On the other hand, there are $r$ lines incident with $P$, each of which is incident with $n-1$ points besides $P$, and each such line and point determines a unique perpendicular line not incident with $P$, so $t=r(n-1)$. Hence $M=(nr-b)/(r-1)$.
\end{proof}

We shall denote the number of polars of a point by $M$.

\begin{lemma}\label{flags}
$Nr=Mn$.
\end{lemma}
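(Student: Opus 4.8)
The plan is to combine two elementary double counts. The identity $Nr=Mn$ relates the pole count $N$ of a line to the polar count $M$ of a point, and both quantities are controlled by the global incidence data $v$, $b$, $n$, $r$; so counting incidences in two different ways should close the gap between them.

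First I would record the basic flag identity. Since every point lies on exactly $r$ lines by Lemma \ref{constant_r} and every line carries exactly $n$ points by Lemma \ref{nPointsPerLine}, counting the incident point--line pairs (the flags) in two ways gives $vr=bn$. Here $v>0$ and $b>0$ by Axioms B5 and B2, and $r\ge 3$ by Lemma \ref{thickpoints}, so none of the quantities appearing below are degenerate.

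Next I would count the incident pole--polar pairs. Recall that $P$ is a pole of $\ell$ if and only if $\ell$ is a polar of $P$, so the set of pairs $(P,\ell)$ with $P$ a pole of $\ell$ may be enumerated either by summing over lines or by summing over points. Each of the $b$ lines has exactly $N$ poles by Lemma \ref{Npoles}, while each of the $v$ points has exactly $M$ polars by Lemma \ref{numberlines}; hence $bN=vM$. Combining this with $vr=bn$, I substitute $b=vr/n$ into $bN=vM$ and cancel the nonzero factor $v$ to obtain $rN=nM$, as required.

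Alternatively, and perhaps more directly, one can bypass the second double count by inserting the explicit formulas from Lemmas \ref{Npoles} and \ref{numberlines}, giving $Nr-Mn=\frac{r(n^2-v)-n(nr-b)}{r-1}=\frac{nb-rv}{r-1}$, which vanishes precisely because $nb=rv$. Either route is routine; I expect no real obstacle beyond selecting the right pair of incidence counts. The only points requiring care are the well-definedness of the constants $n$, $r$, $N$ and $M$ (guaranteed by Lemmas \ref{nPointsPerLine}, \ref{constant_r}, \ref{Npoles} and \ref{numberlines}) and the nonvanishing of $r-1$ and $v$, both of which hold under our standing hypotheses.
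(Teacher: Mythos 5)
Your proof is correct and is essentially identical to the paper's: both count flags to get $vr=bn$, count pole--polar pairs to get $bN=vM$, and combine the two identities. The alternative route you sketch via the explicit formulas of Lemmas \ref{Npoles} and \ref{numberlines} is also valid but is just the same counting information repackaged.
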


\begin{proof}
Every point is incident with $r$ lines and every line is incident with $n$ points, so $vr=bn$. Similarly, every line has $N$ poles and every point has $M$ polars, so $bN=vM$. Thus $Nr=Mn$.
\end{proof}

\begin{lemma}\label{rn}
Suppose that a line $\ell$ has a pole $P$. Then $r<n$.
\end{lemma}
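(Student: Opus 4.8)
The plan is to establish the weak inequality $r\le n$ by a direct counting argument and then to rule out equality. Since $P$ is a pole of $\ell$, Lemma \ref{PoleAllOrOne} shows that all $r$ lines incident with $P$ are perpendicular to $\ell$. By Axiom A*, two distinct lines through $P$ meet $\ell$ in distinct points (a second common point together with $P$ would force them to coincide), and each meets $\ell$ by Axiom B2. Thus $\ell$ carries at least $r$ points, and since there are exactly $n$ lines perpendicular to $\ell$ by Lemma \ref{NoPerps2}, I obtain $r\le n$.

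Suppose for contradiction that $r=n$. Then the $r$ lines through $P$ are \emph{all} of the perpendiculars to $\ell$, so every perpendicular to $\ell$ passes through $P$, and their feet are precisely the $n$ points of $\ell$. These $r$ lines meet pairwise only in $P$, so together they contain $1+r(n-1)=n^2-n+1$ distinct points; hence $v\ge n^2-n+1$. On the other hand $N\ge 1$ because $\ell$ has the pole $P$, so Lemma \ref{Npoles} gives $n^2-v\ge r-1=n-1$, that is, $v\le n^2-n+1$. Therefore $v=n^2-n+1$, the pencil of lines through $P$ already contains every point, and $N=1$. Since $vr=bn$ and $r=n$ force $b=v$, Lemma \ref{flags} then yields $M=N=1$.

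I would next observe that these parameters force a projective plane. The geometry has $v=b=n^2-n+1$ points and lines, with $n$ points on each line and $n$ lines through each point, and a short double count shows that the number of collinear point-pairs equals $b\binom{n}{2}=\binom{v}{2}$; since Axiom A* forbids two lines through a common pair, every pair of points must be collinear, so the geometry is a projective plane of order $n-1$. The maps sending each line to its unique pole and each point to its unique polar are then mutually inverse bijections between points and lines, which I expect to be an incidence-reversing polarity, and no point is incident with its own polar, since a Sherk-pole is by definition not incident with its polar.

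The main obstacle is this final step: I must confront the constructed polarity with the classical fact that every polarity of a finite projective plane possesses at least one absolute point (a point incident with its polar). The delicate part is verifying that the pole--polar correspondence is genuinely incidence-reversing, using only perpendicularity together with Axioms A* and B1--B4; granting this, a polarity with no absolute point is impossible, contradicting $r=n$ and forcing $r<n$. As a fallback I would replace the appeal to absolute points by the standard eigenvalue computation for the symmetric incidence matrix $A$ satisfying $A^2=(n-1)I+J$, whose trace counts the absolute points and is forced to be positive, giving the same contradiction.
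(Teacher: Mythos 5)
Your proposal is correct and takes essentially the same route as the paper's proof: reduce to $r=n$, show the geometry is then a projective plane with $N=M=1$, and contradict Baer's theorem using the pole--polar polarity, which has no absolute points since a pole is by definition not on its polar. The step you flag as delicate closes in one line—when $r=n$, Lemmas \ref{PoleAllOrOne} and \ref{NoPerps2} show that $m\perp\ell$ precisely when $m$ passes through the pole of $\ell$, so Axiom B1 immediately gives the incidence-preserving property of the polarity—a verification the paper itself also leaves implicit.
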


\begin{proof}
Suppose (by way of contradiction) that $r \ge n$. By Lemma \ref{PoleAllOrOne}, the $r$ lines incident with $P$ are all perpendicular to $\ell$, so 
$r=n$. By Lemma \ref{NoPerps2}, every line that is perpendicular to $\ell$ is incident with $P$. By Axiom B3, every point of $\mathcal{G}$ lies on a perpendicular to $\ell$. Each of the $n$ perpendiculars to $\ell$ is incident with $n-1$ points other than $P$. Thus 
there are $n(n-1)$ points collinear with $P$, and therefore any point, and we conclude that $\mathcal{G}$ is a linear space.
Moreover, $\mathcal{G}$ has $v=n(n-1)+1$ points, so $\mathcal{G}$ is what is known as a `$2-(n(n-1)+1,n,1)$ design' 
and hence a projective plane (c.f., \cite[p. 138]{Dembowski}).
Further, by Lemmas \ref{Npoles} and  \ref{flags}, $N=M=1$.
Therefore, every point $P$ has a unique polar and every line has a unique pole, so $\mathcal{G}$ is equipped with a polarity\footnote{A \emph{polarity} is  an involutary map
from points to lines that preserves incidence.
A point $Q$ is \emph{absolute} with respect
to a polarity $\rho$ if $Q$ is incident with $Q^\rho$.} that maps each point to its polar, and this polarity has no absolute points. However, this contradicts
Baer's Theorem on polarities
of finite projective planes \cite[p. 82]{Baer:1946aa} since our projective plane $\mathcal{G}$ has at least three points on every line.
Hence $r<n$.
\end{proof}

\begin{lemma}\label{lowerbound}
Suppose that a line $\ell$ has a pole $P$. Then $n-r+1\le N$.
\end{lemma}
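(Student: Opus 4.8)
The plan is to exhibit $n-r$ poles of $\ell$ distinct from $P$, so that together with $P$ there are at least $n-r+1$ poles. Since $P$ is a pole of $\ell$, Lemma~\ref{PoleAllOrOne} says that all $r$ lines through $P$ are perpendicular to $\ell$; these meet $\ell$ in $r$ pairwise distinct points by Axiom~A* (two distinct lines through $P$ cannot share a further point of $\ell$, as $P\notin\ell$). Because $r<n$ by Lemma~\ref{rn}, the remaining $n-r$ points of $\ell$ form a nonempty set $\{Y_1,\dots,Y_{n-r}\}$, and these are exactly the points of $\ell$ whose perpendicular to $\ell$ does not pass through $P$ (at each $X_i$ the perpendicular is the line $PX_i$, while at each $Y_j$ it is not).

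For each $j$, let $m_j$ be the unique perpendicular to $\ell$ at $Y_j$ (Axiom~B4 and Lemma~\ref{NoPerps2}); then $P\notin m_j$. Using Axiom~B3, choose any perpendicular $g_j$ from $P$ to $m_j$. Since $g_j$ is a line through $P$ it is also perpendicular to $\ell$, and $g_j\neq m_j$ because $P\in g_j$ but $P\notin m_j$. Let $Z_j$ be the intersection of $g_j$ and $m_j$, which exists by Axiom~B2. I claim $Z_j$ is a pole of $\ell$: the lines $g_j$ and $m_j$ are two distinct perpendiculars to $\ell$ through $Z_j$, so it suffices to check $Z_j\notin\ell$. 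If instead $Z_j\in\ell$, then $Z_j\in m_j\cap\ell=\{Y_j\}$, so $g_j$ would be a perpendicular to $\ell$ through $Y_j$, forcing $g_j=m_j$ by uniqueness, a contradiction. Hence each $Z_j$ is a pole of $\ell$, and $Z_j\neq P$ since $Z_j\in m_j$ while $P\notin m_j$.

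The crux is to show that the $Z_j$ are pairwise distinct, and the point worth stressing is that this does \emph{not} require $g_j$ to be the unique perpendicular from $P$ to $m_j$; in other words, it is irrelevant whether $P$ happens to be a pole of $m_j$. Suppose $Z_j=Z_{j'}=:Z$ with $j\neq j'$. Both $g_j$ and $g_{j'}$ pass through the distinct points $P$ and $Z$, so $g_j=g_{j'}$ by Axiom~A*; call this common line $g$. Then $g$ is perpendicular to both $m_j$ and $m_{j'}$ at $Z$, so uniqueness of the perpendicular to $g$ at $Z$ (Axiom~B4) gives $m_j=m_{j'}$, whence $Y_j=Y_{j'}$ and $j=j'$, a contradiction. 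Therefore $Z_1,\dots,Z_{n-r}$ are $n-r$ distinct poles of $\ell$, none equal to $P$, and counting $P$ as well yields $N\ge n-r+1$. I expect this injectivity step to be the only real obstacle; everything else is a routine invocation of Axioms~A*, B2, B3 and~B4 together with Lemmas~\ref{PoleAllOrOne}, \ref{NoPerps2} and~\ref{rn}.
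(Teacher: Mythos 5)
Your proof is correct and follows essentially the same route as the paper's: identify the $n-r$ perpendiculars to $\ell$ not passing through $P$, drop a perpendicular from $P$ to each, and show that the resulting intersection points are pairwise distinct poles of $\ell$, all different from $P$. The only difference is one of exposition: you spell out the pairwise-distinctness argument (via Axioms A* and B4) that the paper compresses into a single parenthetical citation of Axiom B4.
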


\begin{proof}
By Lemma \ref{rn}, we have $r<n$.
Then there are pairwise distinct lines $h_1,\ldots, h_m$ where $m=n-r$ such that $h_i$ is not incident with $P$
and $h_i$ is perpendicular to $\ell$. For $1\le i\le m$, there exists a perpendicular $a_i$ from $P$ to 
$h_i$. Let $Q_i$ be the intersection of $a_i$ and $h_i$. Then $P,Q_1,\ldots,Q_m$ are pairwise distinct points, none of which are incident with $\ell$ (by Axiom B4). Moreover, $a_i$ and $h_i$ are distinct lines that are incident with $Q_i$ and perpendicular to $\ell$ for $1\le i\le m$, so $Q_1,\ldots,Q_m$ are poles of $\ell$. Hence $n-r+1\le N$. 
\end{proof}

\begin{theorem}\label{NoPoles}
No line has a pole.
\end{theorem}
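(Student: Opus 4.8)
The plan is to derive a contradiction from the assumption that some line $\ell$ has a pole $P$, by combining the counting bounds already established with the structural constraints forced by the thickness condition. The key ingredients are Lemma~\ref{rn} (which gives $r<n$), Lemma~\ref{lowerbound} (which gives $n-r+1 \le N$), Lemma~\ref{Npoles} (which expresses $N = (n^2-v)/(r-1)$), and Lemma~\ref{numberlines} together with Lemma~\ref{flags} (which relate $N$, $M$, $v$, $b$, $r$, $n$). First I would assume for contradiction that a pole exists, so that $N \ge 1$ for some line, and argue via the constancy lemmas (Lemmas~\ref{constant_r} and \ref{nPointsPerLine}) that $r$ and $n$ are genuinely global constants, so the pole-count $N$ and polar-count $M$ are the same for every line and every point respectively.

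The heart of the argument should be an \emph{upper} bound on $N$ to play against the lower bound $n-r+1 \le N$ from Lemma~\ref{lowerbound}. The natural place to look is the geometry sitting on a single line: if $\ell$ and $h$ are perpendicular, meeting at a point $P'$, then Lemma~\ref{polesperline} says the number of poles of $\ell$ lying on $h$ equals $M$, the number of polars of $P'$. Summing this over the $n$ perpendiculars to $\ell$ (there are exactly $n$ by Lemma~\ref{NoPerps2}) and accounting for possible overcounting of poles lying on several perpendiculars should produce a relation bounding $N$ in terms of $M$, $n$, and $r$. Feeding $Nr = Mn$ from Lemma~\ref{flags} into this relation, I expect to collapse the inequalities $n-r+1 \le N$ and the upper estimate into a numerical contradiction once Lemma~\ref{rn}'s strict inequality $r<n$ is invoked. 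In effect, the existence of a pole forces $N$ to be simultaneously too large (by the lower bound) and too small (because poles are scarce relative to how many perpendiculars and polars the structure can support).

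The step I expect to be the main obstacle is pinning down the \emph{upper} bound on $N$ precisely, because it requires understanding how the poles of $\ell$ distribute across its $n$ perpendicular lines without double-counting. A single pole $Q$ of $\ell$ has all $r$ of its lines perpendicular to $\ell$ by Lemma~\ref{PoleAllOrOne}, so $Q$ lies on $r$ of the $n$ perpendiculars to $\ell$; this is exactly the overcounting factor I must control. I anticipate that the clean way to manage this is a double count of incident pairs $(Q, h)$ where $Q$ is a pole of $\ell$ and $h$ is a perpendicular to $\ell$ through $Q$: on one side this is $Nr$, and on the other it is $\sum_h (\text{poles of } \ell \text{ on } h) = nM$ by Lemma~\ref{polesperline}, recovering $Nr = Mn$ consistently but also letting me bound the number of distinct poles. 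The delicate point will be ensuring the resulting inequality is strict enough, which is where the projective-plane exclusion underlying Lemma~\ref{rn} and the bound $n \ge 3$ from Corollary~\ref{nGeq3} must be brought to bear to rule out the boundary case.

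Once the contradiction is in hand, the conclusion is immediate: no line can have a pole, so by Lemma~\ref{PoleAllOrOne} every point off a given line has a unique perpendicular to it. I would close by remarking that this is precisely the ingredient needed to verify Axiom~N1 and thereby identify $\mathcal{G}$ as a Bruck net, which is the ultimate target of this subsection.
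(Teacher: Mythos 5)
Your overall strategy --- playing the lower bound $n-r+1\le N$ of Lemma~\ref{lowerbound} against an upper bound obtained by counting pole--perpendicular incidences, with Lemma~\ref{flags} and the inequality $r<n$ of Lemma~\ref{rn} closing the argument --- is the right shape, and it is in fact the shape of the paper's proof. But the mechanism you propose for the upper bound cannot work, and you have correctly located the difficulty without resolving it. The global double count you describe, of pairs $(Q,h)$ with $Q$ a pole of $\ell$ and $h$ a perpendicular to $\ell$ through $Q$, gives $Nr$ on one side and $nM$ on the other (each of the $n$ perpendiculars carries exactly $M$ poles by Lemmas~\ref{polesperline} and~\ref{numberlines}); as you yourself admit, this merely recovers Lemma~\ref{flags} and yields no new relation. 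Substituting $N=nM/r$ into the lower bound gives $n-r+1\le nM/r$, which is no contradiction at all unless you can also control $M$: for $M\ge 2$ the right-hand side can easily exceed the left. So the entire burden of the proof is to show $M=1$, and nothing in your outline produces that; the appeal to ``the projective-plane exclusion underlying Lemma~\ref{rn}'' and to Corollary~\ref{nGeq3} does not help, since those facts are already encapsulated in the conclusion $r<n$, which by itself is compatible with $M\ge 2$.

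The paper's missing idea is to localize the count at the hypothesized pole $P$ rather than count globally. Let $W$ be the number of poles of $\ell$ neither equal to nor collinear with $P$. Every line through $P$ is perpendicular to $\ell$ (Lemma~\ref{PoleAllOrOne}), and each of these $r$ lines carries exactly $M$ poles of $\ell$ (Lemma~\ref{polesperline}), which gives $W=N-1-(M-1)r$. One then double counts pairs $(Q,h)$ with $Q$ such a pole and $h$ a perpendicular to $\ell$ incident with $Q$ but \emph{not} with $P$: each of the $W$ poles lies on $r$ such lines, while the $n-r$ perpendiculars to $\ell$ missing $P$ each carry $M$ poles, giving $Wr=M(n-r)$. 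These two equations, combined with $Nr=Mn$, collapse to $(M-1)r(r-1)=0$, hence $M=1$ and $N=n/r$; only now does Lemma~\ref{lowerbound} give $n-r+1\le n/r$, equivalently $(n-r)(r-1)\le 0$, so $n\le r$, contradicting Lemma~\ref{rn}. In short, your proposal assembles the correct ingredients but is missing the one genuinely new counting argument --- the count anchored at $P$ that distinguishes poles collinear with $P$ from the rest --- and without it the inequalities you have assembled do not close.
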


\begin{proof}
Suppose for a contradiction that some line $\ell$ has a pole $P$.
Then $r<n$ by Lemma \ref{rn}. Let $W$ be the number of poles
of $\ell$ that are not equal to or collinear with $P$. Each of the $r$ lines on $P$ is perpendicular to $\ell$ by Lemma \ref{PoleAllOrOne}, so 
by Lemma \ref{polesperline},    
\[
W=N-1-(M-1)r.
\]
Let $t$ be the number of pairs $(Q,h)$ where $Q$ is a pole of $\ell$ not equal to or collinear with $P$, and $h$ is a line 
perpendicular to $\ell$ that is incident with $Q$ but not $P$. Each of the $n-r$ points 
of $\ell$ not collinear with $P$ has a unique perpendicular to $\ell$, and each
such perpendicular is incident with $M$ poles of $\ell$ by Lemma \ref{polesperline}, so $t=M(n-r)$. 
By another count for $t$,
\[
Wr=M(n-r).
\]
The two equations above yield $(N-1-(M-1)r)r=M(n-r)$.
Now $Nr=Mn$ by Lemma \ref{flags}, so $Mr(r-1)=r(r-1)$. Since $r>1$, it follows that $M=1$ and $N=n/r$.
By Lemma \ref{lowerbound}, $n-r+1\le N=n/r$. Thus $(n-r)(r-1)\le 0$, so $n\le r$, a contradiction.
\end{proof}

The following extension of Axiom B3 is a consequence of Theorem \ref{NoPoles}.

\begin{corollary} \label{UniquePerp}
For a line $\ell$ and point $P$, there exists a unique line that is perpendicular to $\ell$ and incident with $P$.
\end{corollary}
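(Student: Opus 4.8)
The plan is to combine the two cases of Lemma~\ref{PoleAllOrOne} using Theorem~\ref{NoPoles}. Recall that Lemma~\ref{PoleAllOrOne} asserts a dichotomy for a point $P$ and a line $\ell$ with $P$ not incident with $\ell$: either every line through $P$ is perpendicular to $\ell$, or there is a unique perpendicular from $P$ to $\ell$. The first alternative is precisely the condition that $P$ is a pole of $\ell$. Since Theorem~\ref{NoPoles} tells us that no line has a pole, that first alternative never occurs, so the second alternative always holds.

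I would split the argument into the two incidence cases for $P$ and $\ell$. First, suppose $P$ is not incident with $\ell$. By Theorem~\ref{NoPoles}, $P$ is not a pole of $\ell$, so the first alternative of Lemma~\ref{PoleAllOrOne} fails; hence there is a unique perpendicular from $P$ to $\ell$, as required. Second, suppose $P$ is incident with $\ell$. Then existence and uniqueness of a perpendicular to $\ell$ through $P$ are exactly the content of Axiom~B4, so the claim holds in this case as well.

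Putting the two cases together yields the statement: for every line $\ell$ and point $P$, regardless of incidence, there is a unique line through $P$ perpendicular to $\ell$. I do not anticipate any real obstacle here, since the corollary is essentially a bookkeeping combination of Axiom~B4 with the already-established Theorem~\ref{NoPoles} and the dichotomy of Lemma~\ref{PoleAllOrOne}; the only point requiring care is to handle the two incidence cases separately so that the non-incident case invokes the pole-free conclusion while the incident case invokes Axiom~B4 directly.
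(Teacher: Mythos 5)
Your proof is correct and matches the paper's intent exactly: the paper states this corollary as an immediate consequence of Theorem~\ref{NoPoles}, with the incident case handled by Axiom~B4 and the non-incident case by combining the absence of poles with the dichotomy of Lemma~\ref{PoleAllOrOne} (or, equivalently, with Axiom~B3 for existence). Your two-case bookkeeping is precisely the argument the authors leave implicit.
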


\begin{lemma} \label{PerpIffParallel}
Two lines are parallel if and only if they have a common perpendicular.
\end{lemma}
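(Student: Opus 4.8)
The plan is to prove the two implications separately, reading ``parallel'' as ``equal or non-intersecting'' so that it is genuinely reflexive, as claimed for the equivalence relation in Section~\ref{prelim}. The case of equal lines is immediate in both directions: a line $\ell$ is parallel to itself, and by Lemma~\ref{NoPerps2} (together with Corollary~\ref{nGeq3}) it admits at least one perpendicular $g$, which serves as a common perpendicular. So all of the content lies in the case of two distinct lines, and I would dispose of the degenerate case in a single sentence before turning to it.

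For the backward direction (a common perpendicular forces parallelism), I would suppose that distinct lines $\ell$ and $m$ share a perpendicular $g$ and argue by contradiction. The key tool is Corollary~\ref{UniquePerp}: through any point there passes a unique line perpendicular to a given line. If $\ell$ and $m$ met at a point $P$, then $\ell$ and $m$ would both be lines incident with $P$ and perpendicular to $g$, forcing $\ell=m$ by that uniqueness, contrary to $\ell\ne m$. Hence $\ell$ and $m$ do not intersect and so are parallel.

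For the forward direction (parallelism produces a common perpendicular), I would suppose that distinct lines $\ell$ and $k$ do not intersect. By Lemma~\ref{NoPerps2} there is at least one line $m$ perpendicular to $\ell$, and by Axiom~B1 we have $\ell\perp m$. Now Lemma~\ref{Sherk6} states that $\ell$ meets every line except possibly those perpendicular to $m$. Since $\ell$ does not meet $k$, the line $k$ must be one of these exceptions, that is, $k\perp m$. As $m$ is then perpendicular to both $\ell$ and $k$, it is the desired common perpendicular.

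The argument is short precisely because the difficult structural facts have already been proved. I expect the only genuine subtlety to be in the forward direction's use of Lemma~\ref{Sherk6}: one must read its conclusion contrapositively, so that a line failing to meet $\ell$ is \emph{forced} to lie among the lines perpendicular to the auxiliary line $m$. The remaining care is bookkeeping---fixing the reflexive convention for parallelism and handling $\ell=m$ cleanly---so that the equivalence proved here matches exactly the parallelism relation introduced in Section~\ref{prelim}.
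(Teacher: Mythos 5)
Your proposal is correct and takes essentially the same approach as the paper: the forward direction via Lemma~\ref{Sherk6} (read contrapositively, after picking any perpendicular to one of the lines), and the backward direction via Corollary~\ref{UniquePerp}, which is exactly the paper's appeal to Theorem~\ref{NoPoles}. The only difference is cosmetic bookkeeping (handling the reflexive case $\ell=m$ explicitly), which the paper omits.
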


\begin{proof}
If two lines do not have a common perpendicular, then by Lemma \ref{Sherk6} they must intersect. 
Conversely, if two lines do have a common perpendicular, then they cannot intersect by Theorem \ref{NoPoles}.
\end{proof}

\begin{lemma} \label{ExistenceOFParallel}
Axiom $N1$ holds.
\end{lemma}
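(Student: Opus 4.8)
The plan is to establish Axiom N1, which requires showing that for a line $\ell$ and a point $P$ not incident with $\ell$, there exists a \emph{unique} line $m$ through $P$ that does not intersect $\ell$, i.e. that is parallel to $\ell$. The crucial tool is Lemma~\ref{PerpIffParallel}, which translates the non-intersection condition into the existence of a common perpendicular: a line through $P$ is parallel to $\ell$ precisely when it shares a perpendicular with $\ell$. So I would recast the whole problem in terms of perpendicularity, where Corollary~\ref{UniquePerp} gives me clean uniqueness statements to work with.

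First I would construct the parallel. Let $P$ be a point not on $\ell$. By Corollary~\ref{UniquePerp}, there is a unique line $g$ through $P$ perpendicular to $\ell$; let $Q$ be the intersection point of $g$ and $\ell$ (which exists by Axiom B2). Again by Corollary~\ref{UniquePerp}, applied now to the line $g$ and the point $P$, there is a unique line $m$ through $P$ perpendicular to $g$. The line $\ell$ is also perpendicular to $g$ (by Axiom B1, since $g \perp \ell$), so $\ell$ and $m$ share the common perpendicular $g$. By Lemma~\ref{PerpIffParallel}, $\ell$ and $m$ are parallel. Note $m \ne \ell$ since $m \inc P$ but $P$ is not on $\ell$. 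This produces the required parallel line through $P$.

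The main obstacle will be \emph{uniqueness}, and this is where I expect to spend the most care. Suppose $m'$ is any line through $P$ parallel to $\ell$; I must show $m' = m$. By Lemma~\ref{PerpIffParallel}, $m'$ and $\ell$ have a common perpendicular, say $h$: thus $h \perp \ell$ and $h \perp m'$. The difficulty is that a priori $h$ need not pass through $P$ nor through $Q$, so I cannot immediately invoke the uniqueness in Corollary~\ref{UniquePerp}. The key idea is to use Theorem~\ref{NoPoles} to control the perpendiculars from $P$ to $\ell$. Since $h \perp \ell$ and $h \perp m'$ with $m' \inc P$, the line $h$ is perpendicular to $\ell$; I would argue that the perpendicular from $P$ to $\ell$ is forced to be $g$ (by the uniqueness in Corollary~\ref{UniquePerp}, as $P$ is not a pole of $\ell$ by Theorem~\ref{NoPoles}), and then relate $h$ to $g$. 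Concretely, $m'$ is perpendicular to $h$, and $m'$ passes through $P$; if I can show $h$ must in fact be the perpendicular to $\ell$ through the same point as $g$, i.e. $h = g$, then $m'$ and $m$ are both the unique perpendicular to $g$ at $P$, giving $m' = m$.

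To close the gap I would show $h = g$. Both $h$ and $g$ are perpendicular to $\ell$. If $h \ne g$, then since $m'$ is perpendicular to $h$ and passes through $P$, while $m'$ is parallel to $\ell$, I can derive a contradiction by examining the intersection structure: $h$ meets $\ell$ in some point $Q'$, and $m'$ is the perpendicular to $h$ at the point where $m'$ meets $h$. Using Corollary~\ref{UniquePerp} for the point $P$ and the line $h$ together with the fact that $g$ is the unique perpendicular from $P$ to $\ell$, I would force $h$ and $g$ to coincide, or else produce two distinct perpendiculars from $P$ to $\ell$, contradicting that $P$ is not a pole. Once $h = g$ is established, $m'$ and $m$ are each the unique line through $P$ perpendicular to $g = h$, so $m' = m$ by Corollary~\ref{UniquePerp}, completing the proof of uniqueness and hence of Axiom N1.
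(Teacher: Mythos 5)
Your existence argument is exactly the paper's: take the unique perpendicular $g$ from $P$ to $\ell$ (Corollary \ref{UniquePerp}), then the unique perpendicular $m$ to $g$ at $P$ (Axiom B4), and conclude from Lemma \ref{PerpIffParallel} that $m$ is parallel to $\ell$. The gap is in uniqueness. Your plan hinges on showing that the common perpendicular $h$ of $\ell$ and $m'$ supplied by Lemma \ref{PerpIffParallel} must equal $g$. That claim is false and cannot be forced: common perpendiculars of two parallel lines are far from unique. Indeed, in any $(n,r)$-net with perpendicularity as in Definition \ref{PerpInNet} (which by Lemma \ref{NisPSP} is a partial Sherk plane satisfying all the standing hypotheses), every one of the $n\ge 3$ lines of the class $[\ell]^\tau$ is perpendicular to both $\ell$ and $m'$, and only one of them passes through $P$. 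Moreover, the contradiction you hope to extract --- ``two distinct perpendiculars from $P$ to $\ell$'' --- never materialises: if $h \neq g$, then $h$ simply fails to pass through $P$, and $m'$ itself is not perpendicular to $\ell$ (perpendicular lines meet by Axiom B2, whereas $m'$ is parallel to $\ell$), so no second perpendicular from $P$ to $\ell$ appears and Theorem \ref{NoPoles} is never violated.

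The missing ingredient is Lemma \ref{Sherk6}, which is how the paper closes the argument: since $\ell \perp g$, Lemma \ref{Sherk6} says that $\ell$ meets every line except possibly those perpendicular to $g$; as $m'$ does not meet $\ell$, it follows that $m' \perp g$. Now $m$ and $m'$ are both incident with $P$ and perpendicular to $g$, and $P$ lies on $g$, so Axiom B4 gives $m = m'$. In other words, the right move is not to force the particular common perpendicular $h$ produced by Lemma \ref{PerpIffParallel} to pass through $P$, but to show directly that the perpendicular $g$ through $P$ is itself a common perpendicular of $\ell$ and $m'$; that implication is exactly what Lemma \ref{Sherk6} provides and what your proposal lacks.
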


\begin{proof}
Let $\ell$ be a line and $P$ a point not incident with $\ell$. By Corollary \ref{UniquePerp}, there exists a unique  perpendicular $g$ from $P$ to $\ell$. By Axiom B4, there exists a unique line $m$ incident with $P$ and perpendicular to $g$. Clearly $m \neq \ell$ as $m$ is incident with $P$ but $\ell$ is not, so $m$ is a line that is incident with $P$ and parallel to $\ell$ by Lemma \ref{PerpIffParallel}. Say $m'$ is another line that is incident with $P$ and parallel to $\ell$. Since $g$ and $\ell$ are perpendicular, so are $m'$ and $g$ by Lemma \ref{Sherk6}. But now $m$ and $m'$ are both perpendicular to $\ell$ at $P$, so $m = m'$ by Axiom B4.
\end{proof}

The result of Lemmas \ref{rEven}, \ref{N2N3} and \ref{ExistenceOFParallel} is the following.

\begin{corollary}
$\mathcal{G}$ is a finite Bruck net of degree $r$ where $r$ is even and $r>2$.
\end{corollary}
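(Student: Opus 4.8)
The plan is to assemble the corollary directly from the lemmas already established, since all the structural work has been completed. First I would observe that $\mathcal{G}$ satisfies Axiom A* by hypothesis, so it is a partial linear space. By Lemma \ref{ExistenceOFParallel}, Axiom N1 holds, and by Lemma \ref{N2N3}, Axioms N2 and N3 hold. Hence $\mathcal{G}$ satisfies the three defining axioms of a Bruck net. It is finite because a finite partial Sherk plane has only finitely many points by definition; and it has only finitely many lines as well, since by Corollary \ref{nGeq3} every line carries at least two points, so Axiom A* lets the lines inject into the (finite) set of unordered point-pairs.

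Next I would identify the degree and verify its required properties. By Lemma \ref{constant_r}, the number of lines through any point is a constant, which I call $r$; by Lemma \ref{NetProperties}(\ref{rLines}) this constant is exactly the degree of the net. Evenness of $r$ is then immediate from Lemma \ref{rEven}. For the strict inequality $r>2$, Lemma \ref{thickpoints} shows that every point is thick, so $r\ge 3$; combining this with the parity of $r$ forces $r\ge 4$, and in particular $r>2$. Putting these together, $\mathcal{G}$ is a finite Bruck net of even degree $r>2$, as claimed.

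I expect no genuine obstacle here, as the corollary is essentially a bookkeeping step that collects the three axiom-verifications (N1 from Lemma \ref{ExistenceOFParallel}, N2 and N3 from Lemma \ref{N2N3}) together with the finiteness, constancy, parity, and thickness facts proved earlier. The only point requiring a moment's care is that $r>2$ follows from neither thickness alone (which yields only $r\ge 3$) nor parity alone, but precisely from the interaction of the two.
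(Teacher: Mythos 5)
Your proof is correct and follows essentially the same route as the paper, which states the corollary as an immediate consequence of Lemmas \ref{rEven}, \ref{N2N3} and \ref{ExistenceOFParallel} (with thickness from Lemma \ref{thickpoints} and constancy from Lemma \ref{constant_r} implicitly supplying $r>2$ and the well-defined degree). Your additional remarks on finiteness of the line set and on the interaction of parity with thickness are just careful spelling-out of what the paper leaves implicit.
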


Now that Theorem \ref{main}(2) holds, we prove that perpendicularity has the form of Definition \ref{PerpInNet}.

\begin{lemma} \label{Tau}
There exists a fixed-point-free involution $\tau$ on the set of parallel classes such that $\ell \perp m$ if and only if $[\ell]^\tau = [m]$.
\end{lemma}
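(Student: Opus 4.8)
The plan is to show that perpendicularity is governed entirely by parallel classes, by first establishing that the set of lines perpendicular to any fixed line is precisely a single parallel class. Fix a line $\ell$. By Lemma \ref{NoPerps2} there are exactly $n$ lines perpendicular to $\ell$. If $m$ and $m'$ are both perpendicular to $\ell$, then by Axiom B1 the line $\ell$ is a common perpendicular of $m$ and $m'$, so by Lemma \ref{PerpIffParallel} the lines $m$ and $m'$ are parallel. Hence all $n$ perpendiculars to $\ell$ lie in a single parallel class, which I denote $[\ell]^\tau$; since a parallel class contains exactly $n$ lines by Lemma \ref{NetProperties}(\ref{nLinesParallelClass}), the lines perpendicular to $\ell$ are \emph{exactly} the lines of that one class.

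Next I would check that $[\ell]^\tau$ depends only on the class $[\ell]$, so that $\tau$ is a well-defined map on the set of parallel classes. If $\ell' \parallel \ell$, then by Lemma \ref{PerpIffParallel} they have a common perpendicular $p$; since $p$ is perpendicular to both $\ell$ and $\ell'$, the class of perpendiculars to $\ell$ and the class of perpendiculars to $\ell'$ both equal $[p]$, whence $[\ell]^\tau = [\ell']^\tau$.

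I would then verify the remaining properties. For the equivalence $\ell \perp m \iff [\ell]^\tau = [m]$, the forward direction is immediate from the definition, since $\ell \perp m$ places $m$ in the perpendicular class $[\ell]^\tau$; conversely, if $[m] = [\ell]^\tau$ then $m$ lies in the parallel class consisting of \emph{all} lines perpendicular to $\ell$, so $m \perp \ell$ and hence $\ell \perp m$ by Axiom B1. This equivalence shows in particular that $\tau$ is an involution: applying the definition twice, $[\ell]^\tau = [m]$ for any $m \perp \ell$, and then $[m]^\tau = [\ell]$ because $\ell \perp m$ places $\ell$ in $[m]^\tau$. Finally, $\tau$ is fixed-point-free: if $[\ell]^\tau = [\ell]$ then some perpendicular $m$ of $\ell$ would satisfy $[m] = [\ell]$, but $m \perp \ell$ forces $m$ and $\ell$ to intersect by Axiom B2, while $m \neq \ell$ by Lemma \ref{NoSelfPerp}, so $m$ and $\ell$ are distinct non-parallel lines, contradicting $[m] = [\ell]$.

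The main obstacle is the first step, namely recognising that the $n$ perpendiculars to $\ell$ form not merely a subset of a parallel class but the entire class. This is exactly what forces the converse direction of the equivalence and the well-definedness of $\tau$, and it rests on matching the count of $n$ perpendiculars from Lemma \ref{NoPerps2} against the size $n$ of a parallel class from Lemma \ref{NetProperties}(\ref{nLinesParallelClass}).
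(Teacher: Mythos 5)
Your proof is correct and follows essentially the same route as the paper's: defining $[\ell]^\tau$ as the set of perpendiculars to $\ell$, identifying it as a full parallel class by matching the count from Lemma \ref{NoPerps2} against Lemma \ref{NetProperties}(\ref{nLinesParallelClass}) via Lemma \ref{PerpIffParallel}, and then checking well-definedness, the equivalence, the involution property, and fixed-point-freeness. The only cosmetic difference is that you obtain bijectivity as a consequence of $\tau$ being an involution, whereas the paper verifies injectivity and surjectivity directly.
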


\begin{proof}
Let $\ell$ be a line. Denote the set of perpendiculars to $\ell$ by $X_\ell$. Now $X_\ell$ has size $n$, and any two lines in $X_\ell$ are parallel by Lemma \ref{PerpIffParallel}, so $X_\ell$ is a parallel class by Lemma \ref{NetProperties}(\ref{nLinesParallelClass}). Define a map $\tau$ on the set of parallel classes of $\mathcal{G}$ by mapping $[\ell]$ to $X_\ell$ for all lines $\ell$. If $[\ell] = [\ell']$, then $\ell$ and $\ell'$ have a common perpendicular, say $m$, by Lemma \ref{PerpIffParallel}, so $X_\ell = [m] = X_{\ell'}$. Hence $\tau$ is well defined. If $\ell$ and $\ell'$ are lines such that $X_\ell = X_{\ell'}$, then $\ell$ and $\ell'$ have a common perpendicular, so $[\ell] = [\ell']$ by Lemma \ref{PerpIffParallel}. Thus $\tau$ is a bijection. Moreover, $\tau$ has no fixed points since no line is perpendicular to itself by Lemma \ref{NoSelfPerp}, and $\tau$ is clearly an involution. Lastly, two lines $\ell$ and $m$ are perpendicular if and only if $m \in X_\ell$. This occurs precisely when $[m] = [\ell]^\tau$, as desired.
\end{proof}

Finally, we note that Sherk's theorem follows from Theorem \ref{main} (and Lemma \ref{NonCollinear})  upon assuming that $\mathcal{G}$ is a linear space.

\begin{corollary} \label{mainSherk}
The following are equivalent.
\begin{enumerate}
	\item A finite Sherk plane.
	\item A finite affine plane of odd order.
\end{enumerate}
\end{corollary}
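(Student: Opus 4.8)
The plan is to read off both implications from the machinery already developed, letting the linear-space hypothesis (Axiom~A) supply the extra structure.

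For the implication $(2)\Rightarrow(1)$, I would start from a finite affine plane of odd order $n$. Viewed as a Bruck net it is an $(n,n+1)$-net, so its degree is $r=n+1$; since $n$ is odd, $r$ is even, and since $n\ge 3$ we have $r>2$. Equipping the plane with a perpendicularity as in Definition~\ref{PerpInNet}, Lemma~\ref{NisPSP} shows it is a partial Sherk plane, and since an affine plane satisfies Axiom~A it is in fact a (full) Sherk plane. This direction requires no real work beyond citing Lemma~\ref{NisPSP}.

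For the implication $(1)\Rightarrow(2)$, the idea is to verify the thickness hypothesis of Theorem~\ref{main}(1) and then invoke that theorem. The key observation is that Axiom~A forces every point to be thick. By Lemma~\ref{NonCollinear} some line $\ell_0$ carries at least three points, and any point off $\ell_0$ is joined to these by three distinct lines (Axiom~A), hence is thick. For a point on $\ell_0$ I would argue by contradiction: suppose $P$ has $r_P=2$, with lines $g,h$ through it. Then $g\perp h$ by Axiom~B4, and in a linear space every other point lies on $g\cup h$, so each remaining line has at most one point on each of $g\setminus\{P\}$ and $h\setminus\{P\}$ and misses $P$, hence has at most two points. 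Lemma~\ref{NonCollinear} then forces the three-point line to be $g$ or $h$; say $n_g\ge 3$. Comparing point counts via Lemma~\ref{nPointsPerLineNotPerp} shows every such short line must be perpendicular to $g$, which through Axiom~B4 collapses $h$ to two points and makes all $n_g$ perpendiculars of $g$ pass through the unique second point $Q$ of $h$. Thus every line through $Q$ is perpendicular to $g$, and the only available perpendicular pairs are either $\{g,\cdot\}$ or pairs concurrent at $Q$; one checks directly that these admit no triple $x\perp y$, $x\not\perp z$, $y\not\perp z$ in general position, contradicting Axiom~B5. Hence $r_P\ge 4$, every point is thick, and in particular every line is incident only with thick points.

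With the thickness hypothesis in hand, Theorem~\ref{main} gives that $\mathcal{G}$ is a finite Bruck net of even degree $r>2$. I would then upgrade this net to an affine plane using Axiom~A: in an $(n,r)$-net a point is collinear with exactly $r(n-1)$ others, whereas a linear space forces it to be collinear with all $n^2-1$ remaining points, so $r(n-1)=n^2-1$ and hence $r=n+1$. A net of degree $n+1$ is an affine plane of order $n$, and since $r=n+1$ is even, $n$ is odd. I expect the thickness step—excluding $r_P=2$—to be the main obstacle, since every other part is either a direct appeal to Theorem~\ref{main} or a one-line count; the delicate point is that the linear-space structure forces a ``pole'' configuration (all lines through $Q$ perpendicular to $g$) precisely when a point fails to be thick, and it is Axiom~B5, the sole non-degeneracy axiom, that rules this out.
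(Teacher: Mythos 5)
Your proposal is correct and follows essentially the same route as the paper, whose entire proof of Corollary~\ref{mainSherk} is the remark that it follows from Theorem~\ref{main} and Lemma~\ref{NonCollinear} once Axiom~A is assumed: you use Lemma~\ref{NisPSP} for $(2)\Rightarrow(1)$, and for $(1)\Rightarrow(2)$ you verify the thickness hypothesis and then apply Theorem~\ref{main}, upgrading the resulting net to an affine plane by the collinearity count $r(n-1)=n^2-1$. Your Axiom~B5 argument excluding a point with $r_P=2$ (Axiom~A confines all points to $g\cup h$, forces every other line to be short and hence perpendicular to $g$, and collapses all such lines through the second point $Q$ of $h$, leaving only concurrent or $\{g,\cdot\}$ perpendicular pairs) is precisely the detail the paper leaves to the reader, and it is sound, up to noting the easy degenerate case $h=\{P\}$, which contradicts Lemma~\ref{NonCollinear} directly.
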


\section{Partial Sherk planes that do not satisfy Theorem \ref{main}(1)}\label{further}

In this section, we give two methods for constructing   partial Sherk planes in which every line is incident with a thin point. This is done by adding points and lines to a given partial Sherk plane.

Let $\mathcal{G}$ be any partial Sherk plane, and let $k$ be a positive integer. Let $\mathcal{G}_k$ denote the geometry defined as follows. For each line $\ell$ of $\mathcal{G}$ and integer $i\in\{1,\ldots,k\}$, we add a new point $P_{\ell,i}$ to $\mathcal{G}$, and we extend the incidence relation of $\mathcal{G}$ so that $P_{\ell,i}$ is incident with $\ell$ but no other lines of $\mathcal{G}$. Now add  lines $g_1,\ldots,g_k$ such that, for each $i\in\{1,\ldots,k\}$, $g_i$ is incident with $P_{\ell,i}$ for all lines $\ell$ of $\mathcal{G}$, and no other points.  Lastly, extend the perpendicularity relation of $\mathcal{G}$ so that, for each $i\in \{1,\ldots,k\}$, $g_i$ is perpendicular to  every line of $\mathcal{G}$, every line of $\mathcal{G}$ is perpendicular to $g_i$, and $g_i$ is not perpendicular to $g_j$ for $j\in\{1,\ldots,k\}$.

Next, suppose that $k$ is even, and let $\mathcal{G}_k^*$ denote the geometry defined as follows. Add a new point $Q$ to $\mathcal{G}_k$, and extend the incidence relation of $\mathcal{G}_k$ so that $Q$ is incident with $g_1,\ldots,g_k$ and no other lines of $\mathcal{G}_k$. Lastly, change the perpendicularity relation of $\mathcal{G}_k$ so that $g_i$ is perpendicular to $g_{k-i+1}$ for $1\le i\le k$. See Figure \ref{fig:AG23} for  the case where $k=4$ and $\mathcal{G}$ is the affine plane of order $3$.

\begin{lemma}
 Let $\mathcal{G}$ be a (finite) partial Sherk plane, and let $k$ be a positive integer.  Then $\mathcal{G}_k$ is a (finite) partial Sherk plane. Further, if $k$ is even, then $\mathcal{G}_k^*$ is a (finite) partial Sherk plane.
 \end{lemma}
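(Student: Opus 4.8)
The plan is to verify Axioms A* and B1--B5 directly for each of $\mathcal{G}_k$ and $\mathcal{G}_k^*$, organising the case analysis around a classification of points and of the lines through them. The structural facts that make every case short are: in $\mathcal{G}_k$ each new point $P_{\ell,i}$ is incident with exactly the two lines $\ell$ and $g_i$, while every old point retains precisely its old incident lines, since no $g_j$ passes through an old point; and in $\mathcal{G}_k^*$ the only further change is that $Q$ is incident with exactly $g_1,\dots,g_k$. I would also record at the outset that two old lines meet in $\mathcal{G}_k$ (and in $\mathcal{G}_k^*$) at exactly the old points where they met in $\mathcal{G}$, because each $P_{\ell,i}$ lies on a single old line and $Q$ lies on no old line. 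With these in hand, every axiom reduces to a finite check according to whether each object involved is old, of the form $g_i$, of the form $P_{\ell,i}$, or equal to $Q$.

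For $\mathcal{G}_k$, Axioms A*, B1 and B2 are immediate from the construction: distinct points share at most one line by the incidence bookkeeping above, perpendicularity was extended symmetrically, and the only perpendicular pairs are two old lines (which meet in $\mathcal{G}$) or $g_i$ and an old line $\ell$ (which meet at $P_{\ell,i}$), the $g_j$ being mutually non-perpendicular. For Axiom B3, the point is that every old line is perpendicular to every $g_j$: if the target line is some $g_j$ one uses an old line through the given point, and if the target is old one uses $g_i$ through a new point (or the old perpendicular at an old point). The heart of the matter is Axiom B4. At an old point on an old line the unique perpendicular is inherited from $\mathcal{G}$, no $g_j$ being available there; at a new point $P_{\ell,i}$ the only incident lines are $\ell$ and $g_i$, and exactly one of these is perpendicular to any prescribed line, using that no $g_i$ is self-perpendicular and that no old line is (Lemma \ref{NoSelfPerp}). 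Axiom B5 is inherited verbatim, since the witnessing triple of old lines keeps both its perpendicularities and its lack of a common intersection point.

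For $\mathcal{G}_k^*$ the argument runs in parallel, the essential new ingredient being that $k$ is even, so $i\mapsto k-i+1$ is a fixed-point-free involution of $\{1,\dots,k\}$. This guarantees $g_{k-i+1}\neq g_i$, so no $g_i$ becomes self-perpendicular, and it makes Axiom B2 hold for the pair $g_i\perp g_{k-i+1}$ precisely because both lines pass through $Q$. The crucial case is again Axiom B4, now at $Q$: the lines through $Q$ are exactly $g_1,\dots,g_k$, and for $g_j$ the unique incident perpendicular is $g_{k-j+1}$, which exists, is distinct from $g_j$, and is unique because the involution pairs indices bijectively. The remaining cases of A* and B1--B5 reduce, as before, to the four-way classification of objects, and are settled by the same incidence bookkeeping together with the observation that adjoining $Q$ to the $g_i$ creates no new coincidences among old lines or among the $P_{\ell,i}$.

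I expect the main obstacle to be Axiom B4, and within it the \emph{uniqueness} of perpendiculars at the newly added points rather than their existence; this is exactly where the thinness of the $P_{\ell,i}$ and the fixed-point-free pairing of the $g_i$ in $\mathcal{G}_k^*$ do the work, and where the hypothesis that $k$ is even is indispensable. Finiteness is then a short coda: one adjoins only $k$ lines and $kb+1$ points, where $b$ is the number of lines of $\mathcal{G}$, and $b$ is finite whenever the point set is --- a line with at least two points is determined by two of them via Axiom A*, while by Lemmas \ref{nPointsPerLineNotPerp} and \ref{NoPerps2} any line with fewer points must be perpendicular to a fixed line with at least three points, of which there are only finitely many. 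Hence $\mathcal{G}_k$ and $\mathcal{G}_k^*$ are finite when $\mathcal{G}$ is.
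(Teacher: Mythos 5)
Your proof is correct and takes essentially the same approach as the paper's: a direct case-by-case verification of the axioms, with A*, B1, B2 and B5 following immediately from the construction and the real work concentrated in B3/B4 at the new points $P_{\ell,i}$ and at $Q$ (where the evenness of $k$ enters). Your explicit argument that a finite partial Sherk plane has finitely many lines --- via Axiom A* for lines with two or more points and Lemmas \ref{nPointsPerLineNotPerp} and \ref{NoPerps2} for the rest --- is a worthwhile detail that the paper's proof leaves implicit.
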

 
 \begin{proof}
 By construction, $\mathcal{G}_k$ and $\mathcal{G}_k^*$ ($k$ even) satisfy Axioms A$^*$, B1 and B2, and since Axiom B5 holds for $\mathcal{G}$, it also holds for $\mathcal{G}_k$ and $\mathcal{G}_k^*$.  It remains to show that Axioms B3 and B4 hold.
 
 Let $P$ and $\ell$ be a point and line of $\mathcal{G}_k$ or $\mathcal{G}_k^*$ respectively. We claim that there exists a line $m$ such that $m\inc P$ and  $m\perp\ell$, and that $m$ is the unique such line when $P\inc \ell$. This claim holds in $\mathcal{G}_k^*$ if $P=Q$, so we assume otherwise.   If $\ell\neq g_i$ for $1\le i\le n$, then since $\mathcal{G}$ satisfies Axioms B3 and B4, we may assume that  $P=P_{h,j}$ for some line $h$ of $\mathcal{G}$ and integer $j$, in which case the claim holds with $m=g_j$. Thus we may assume that $\ell=g_i$ for some $i$.  If $P=P_{h,j}$ for some line $h$ of $\mathcal{G}$ and integer $j$, then the claim holds with $m=h$. Otherwise, $P$ must be a point of $\mathcal{G}$, in which case we may take $m$ to be any line on $P$ (and such a line always exists). Thus Axioms B3 and B4 hold in $\mathcal{G}_k$ and $\mathcal{G}_k^*$.  
 \end{proof}
 
 \begin{figure}[ht]
\begin{center}
\includegraphics[page=1,height=7cm]{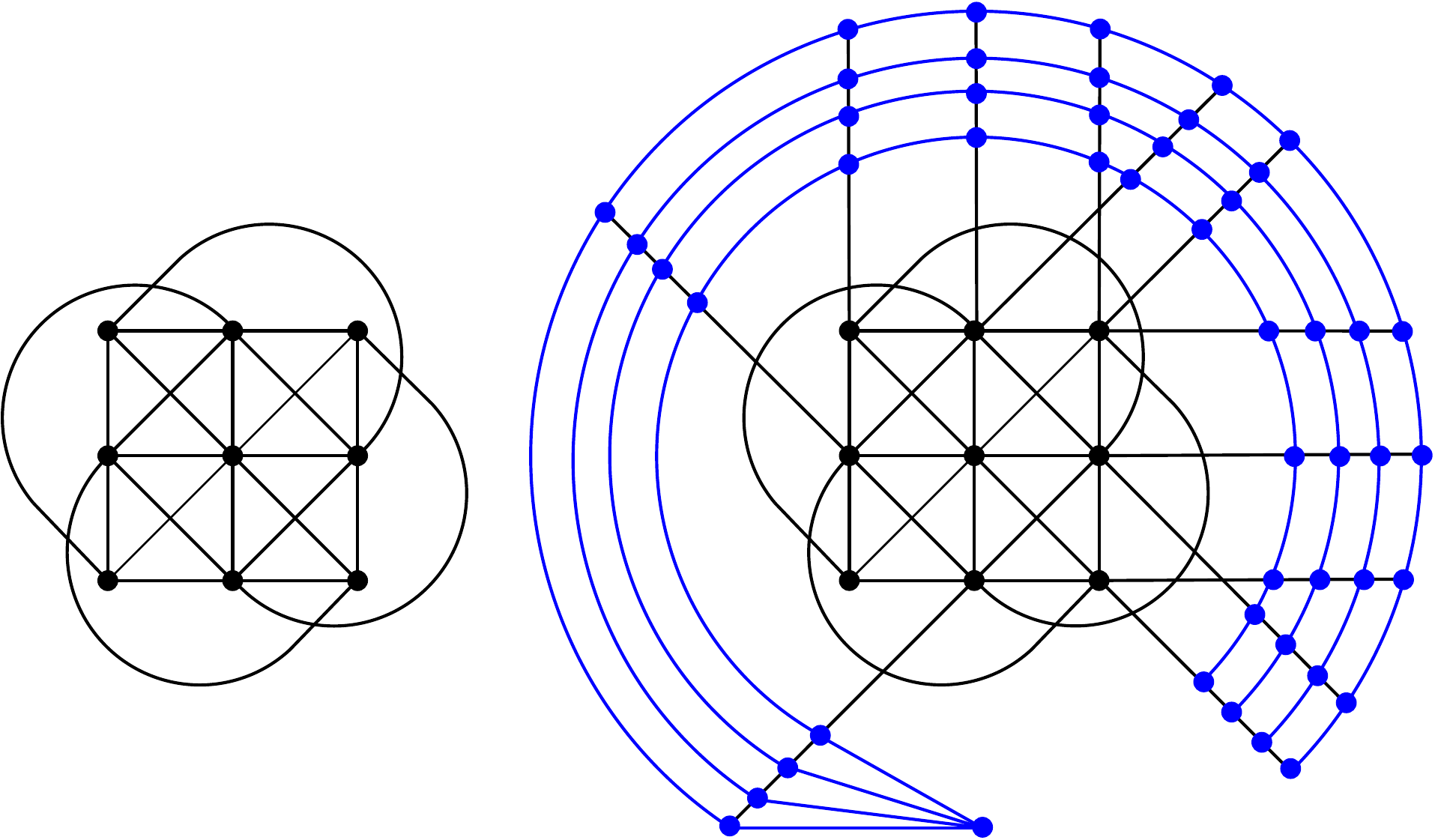}
\end{center}
\caption{The partial Sherk plane $\mathcal{G}_4^*$ (right) where $\mathcal{G}$ is the affine plane  of order $3$ (left).}
\label{fig:AG23}
\end{figure}

Every line of $\mathcal{G}_k$ and $\mathcal{G}_k^*$ is incident with a thin point, so  there are infinitely many finite partial Sherk planes in which every line is incident with a thin point. Moreover, $\mathcal{G}_k$ has the property that some line is incident only with thin points. On the other hand, if every line of $\mathcal{G}$ is incident with a thick point (which is the case for a Bruck net of degree $r$ where $r$ is even and  $r>2$), then every line of $\mathcal{G}_k^*$ is incident with a thick point. In fact, if $\mathcal{G}$ is an $(n,r)$-net where $r$ is even and $r>2$, then for $k=n(r-1)$,  $\mathcal{G}_{k}$  has the property that every line is incident with a constant number of points, namely $nr=n+k$; similarly, if $n$ is odd and $k=n(r-1)+1$, then $\mathcal{G}_k^*$  has the property that every line is incident with a constant number of points, namely $nr+1=n+k$.

\bibliographystyle{line}
\bibliography{references}

\end{document}